\begin{document}

 \baselineskip 16.6pt
\hfuzz=6pt

\widowpenalty=10000

\newtheorem{cl}{Claim}
\newtheorem{theorem}{Theorem}[section]
\newtheorem{proposition}[theorem]{Proposition}
\newtheorem{coro}[theorem]{Corollary}
\newtheorem{lemma}[theorem]{Lemma}
\newtheorem{definition}[theorem]{Definition}
\newtheorem{assum}{Assumption}[section]
\newtheorem{example}[theorem]{Example}
\newtheorem{remark}[theorem]{Remark}
\renewcommand{\theequation}
{\thesection.\arabic{equation}}

\def\SL{\sqrt H}

\newcommand{\mar}[1]{{\marginpar{\sffamily{\scriptsize
        #1}}}}

\newcommand{\as}[1]{{\mar{AS:#1}}}

\newcommand\R{\mathbb{R}}
\newcommand\RR{\mathbb{R}}
\newcommand\CC{\mathbb{C}}
\newcommand\NN{\mathbb{N}}
\newcommand\ZZ{\mathbb{Z}}
\newcommand\HH{\mathbb{H}}
\def\RN {\mathbb{R}^n}
\renewcommand\Re{\operatorname{Re}}
\renewcommand\Im{\operatorname{Im}}

\newcommand{\mc}{\mathcal}
\newcommand\D{\mathcal{D}}
\def\hs{\hspace{0.33cm}}
\newcommand{\la}{\alpha}
\def \l {\alpha}
\newcommand{\eps}{\varepsilon}
\newcommand{\pl}{\partial}
\newcommand{\supp}{{\rm supp}{\hspace{.05cm}}}
\newcommand{\x}{\times}
\newcommand{\lag}{\langle}
\newcommand{\rag}{\rangle}

\newcommand\wrt{\,{\rm d}}

\title[]{Quantitative weighted estimates for rough singular integrals\\ on homogeneous groups}

\author{Zhijie Fan and Ji Li}

 \address{Zhijie Fan, Department of Mathematics, Sun Yat-sen
University, Guangzhou, 510275, P.R. China}
\email{fanzhj3@mail2.sysu.edu.cn}

\address{Ji Li, Department of Mathematics, Macquarie University, NSW, 2109, Australia}
\email{ji.li@mq.edu.au}

  \date{\today}
 \subjclass[2010]{42B25, 42B20,  43A85}
\keywords{Quantitative weighted bounds, singular integral operators, sparse domination, rough kernel, homogeneous groups}

\begin{abstract}
In this paper, we study weighted $L^{p}(w)$ boundedness ($1<p<\infty$ and $w$ a Muckenhoupt $A_{p}$ weight) of singular integrals with homogeneous convolution kernel
$K(x)$
on an arbitrary homogeneous group $\mathbb H$ of dimension $\mathbb{Q}$, {under the assumption that $K_0$, the restriction of $K$ to the unit annulus, is mean zero and $L^{q}$ integrable  for some $q_{0}<q\leq \infty$,} where $q_{0}$ is a fixed constant depending on $w$.  We obtain a quantitative weighted bound, which is consistent with the one obtained by Hyt\"onen--Roncal--Tapiola in the Euclidean setting, for this operator on $L^{p}(w)$.
Comparing to the previous results in the Euclidean setting, our assumptions on the kernel and on the underlying space are weaker.
Moreover, we investigate the
quantitative weighted bound for the bi-parameter rough singular integrals on product homogeneous Lie groups.
\end{abstract}

\maketitle


\section{Introduction}\label{se1}
Let $\HH=\RR^{n}$ be a homogeneous group (see \cite{FoSt,s93}), which is a nilpotent Liegroup with multiplication, inverse, dilation, and norm structures
$(x,y)\mapsto xy, \ \ x\mapsto x^{-1},\ \ (t,x)\mapsto t\circ x,\ \ x\mapsto \rho(x)$
for $x,y\in\HH$, $t>0$. The multiplication and inverse operations are polynomials and form a group with identity 0, the dilation structure preserves the group operations and is given in coordinates by
\begin{align*}
t\circ (x_{1},\ldots,x_{n})=(t^{\alpha_{1}}x_{1},\ldots,t^{\alpha_{n}}x_{n})
\end{align*}
for some constants $0<\alpha_{1}\leq \alpha_{2}\leq\ldots\leq \alpha_{n}$. Besides, $\rho(x):=\max\limits_{1\leq j\leq n}\{|x_{j}|^{1/a_{j}}\}$ is a norm associated to the dilation structure. We call $n$ the Euclidean dimension of $\HH$, and the quantity $\mathbb{Q}=\sum_{j=1}^{n}\alpha_{j}$ the homogeneous dimension of $\HH$, respectively.

We now recall the notion of homogeneous singular integrals on homogeneous group. Let $\Sigma:=\{x\in \HH:\rho(x)=1\}$ and $K$ be a homogeneous convolution kernel on $\HH$, so that
\begin{align*}
K(x)=\frac{\Omega(\rho(x)^{-1}\circ x)}{\rho(x)^{\mathbb{Q}}} \quad{\rm with}\quad \int_{\Sigma}\Omega(\theta)dS(\theta)=0.
\end{align*}
The homogeneous singular integral operator $T$ is defined  initially for $f\in C_{0}^{\infty}(\HH)$ as follows
\begin{align*}
T(f)(x):={\rm p.v.}\int_{\HH}K(y)f(y^{-1}x)dy=\lim\limits_{\substack{\varepsilon\rightarrow 0\\R\rightarrow \infty}}\int_{\varepsilon<\rho(y)<R}K(y)f(y^{-1}x)dy.
\end{align*}

The study of rough singular integral operators dates back to Calder\'{o}n and Zygmund's work \cite{CZ0,CZ}. It is well-known that when $\HH$ is an isotropic Euclidean space, Calder\'{o}n and Zygmund \cite{CZ} used the method of rotations to show that if $\Omega\in L{\rm log}L(\mathbb{S}^{n-1})$, then $T$ is bounded on $L^{p}(\mathbb{R}^{n})$ for all $1<p<\infty$. Later it was shown by Christ \cite{Christ1,Christ2},  Hofmann \cite{Hof} and Seeger \cite{Se} that such operators are of weak-type $(1,1)$ and by Tao \cite{Tao} that the underlying space can be generalized to homogeneous group $\HH$. There are also many other important progress on rough singular integral operators (see for example \cite{CD,DL,DLu,DR,FP, GS,  LMW}).

Furthermore, there has been numerous work on weighted inequalities of singular integral with rough kernels, see for example \cite{D,FPY,Lerner4,LPRR} and the references therein for its development and applications.
Recently, the sharp weight inequalities for standard Calder\'{o}n--Zygmund operators was proved by Hyt\"{o}nen  \cite{H} via constructing the representation theorem, which gives the following weighted $L^{p}$ bound with sharp dependence on $[w]_{A_{p}}$.
\begin{align*}
\|Tf\|_{L^{p}(w)}\leq C_{p,T}[w]_{A_{p}}^{\max\{1,1/(p-1)\}}\|f\|_{L^{p}(w)},\ \ 1<p<\infty.
\end{align*}
Besides, Lerner \cite{Lerner1,Lerner2} and Lacey \cite{La} gave alternative approaches to this result. Then a natural question arises: can we also obtain a sharp weight bound for rough homogeneous singular integral operators? We point out that this topic has been studied intensively especially in the last three years (the pointwise version originated in \cite{La}) with the key tool sparse domination, see for example \cite{CLRT,CCDO,HRT,La01,La02,Ler1,Lerner3,PRR}. {Among these results, we would like to highlight that Hyt\"{o}nen--Roncal--Tapiola \cite{HRT} first quantitatively proved that if $\Omega\in L^{\infty}(\mathbb{S}^{n-1})$, then
\begin{align}\label{Q W bd for T}
\|T\|_{L^{p}(w)\rightarrow L^{p}(w)}\leq C_{n,p}\|\Omega\|_{L^{\infty}}\{w\}_{A_{p}}(w)_{A_{p}}.
\end{align}
In particular,
\begin{align*}
\|T\|_{L^{2}(w)\rightarrow L^{2}(w)}\leq C_{n}\|\Omega\|_{L^\infty}[w]_{A_2}^{2},
\end{align*}
(For the definitions of $\{w\}_{A_{p}}$ and $(w)_{A_{p}}$, we refer the readers to Section 2). Different proofs of the quantitative bound of $T$  (as in \eqref{Q W bd for T}) via a sparse domination principle were obtained by Conde-Alonso, Culiuc, Di Plinio and Ou \cite{CCDO}, and by Lerner \cite{Ler1}.}

Inspired by Tao's work \cite{Tao}, Sato \cite{sato} extended part of the classical results related to singular integral to homogeneous group. He obtained the $L^{p}(w)$ boundedness for rough homogeneous singular integral operators under the assumption that $w\in A_{p}$ for some $1<p<\infty$ and $\Omega\in L^{\infty}(\Sigma)$. However, It is still unclear that whether a quantitative weight bound can be obtained in this setting and that whether the condition $\Omega\in L^{\infty}(\Sigma)$ can be weakened.

The purpose of this paper is to address these points. As in \cite{Tao}, let $K_{0}$ be the restriction of $K$ to the annulus $\Sigma_{0}=\{x\in\HH: 1\leq \rho(x)\leq 2\}$, then it is clear that $\Omega\in L^{q}(\Sigma)$ implies $K_{0}\in L^{q}(\Sigma_{0})$ for the same $q\in(1,\infty]$. Our main result is the following theorem.
\begin{theorem}\label{main}
Let $1<p<\infty$, $w\in A_{p}$. Suppose that $K_{0}$ has mean zero and there exists a constant $c_{\mathbb{Q},p}>0$ such that $K_{0}\in L^{q}$ for some $q>c_{\mathbb{Q},p}(w)_{A_{p}}$, then
\begin{align*}
\|T\|_{L^{p}(w)\rightarrow L^{p}(w)}\leq C_{\mathbb{Q},p,q}\|K_{0}\|_{q}\{w\}_{A_p}(w)_{A_p}.
\end{align*}
In particular,
\begin{align*}
\|T\|_{L^{2}(w)\rightarrow L^{2}(w)}\leq C_{\mathbb{Q},q}\|K_{0}\|_{q}[w]_{A_2}^{2},
\end{align*}
for some constants $C_{\mathbb{Q},p,q}$ and $C_{\mathbb{Q},q}$ independent of $w$.
\end{theorem}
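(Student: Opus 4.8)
The plan is to deduce Theorem~\ref{main} from a \emph{sparse domination} estimate and then invoke the (now standard) sharp weighted bounds for sparse operators on a space of homogeneous type; recall that $\HH$ equipped with the quasi-metric $d(x,y)=\rho(y^{-1}x)$ and Lebesgue measure is $\mathbb{Q}$-Ahlfors regular, so it carries Christ--Hyt\"onen--Kairema dyadic systems and all the associated machinery. Concretely, I would fix such a dyadic grid and aim to prove: there is an exponent $r_0=r_0(\mathbb{Q},p,q)>1$, decreasing to $1$ as $q\to\infty$, such that for every $r\in[r_0,2]$ and all bounded compactly supported $f,g$ there is a sparse family $\mathcal{S}$ of dyadic cubes with
\[
|\langle Tf,g\rangle|\ \le\ C_{\mathbb{Q},p}\,\|K_0\|_q\sum_{Q\in\mathcal{S}}|Q|\,\langle f\rangle_{1,Q}\,\langle g\rangle_{r,Q},
\qquad \langle h\rangle_{s,Q}:=\big(|Q|^{-1}{\textstyle\int_Q}|h|^s\big)^{1/s},
\]
together with the dual statement (roles of $f$ and $g$ interchanged). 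Granting this, the weighted theory of $(1,r)$-sparse forms on spaces of homogeneous type gives an $L^p(w)$ bound of the shape $C\,\|K_0\|_q\{w\}_{A_p}(w)_{A_p}$ provided $r$ can be taken with $r'\gtrsim (w)_{A_p}$; matching this against the constraint $r\ge r_0(\mathbb{Q},p,q)$ from the sparse bound is exactly what forces $q>c_{\mathbb{Q},p}(w)_{A_p}$, and specializing to $p=2$ yields the $[w]_{A_2}^2$ bound. Thus the admissible range of $q$ shrinks as $(w)_{A_p}$ grows, precisely as in the statement.

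For the sparse bound I would decompose the kernel dyadically, $K=\sum_{j\in\ZZ}K_j$ with $K_j=K\cdot\mathbf{1}_{\{2^j\le\rho<2^{j+1}\}}$, so that each $K_j$ is mean zero, $\|K_j\|_{L^1}\asymp\|K_0\|_{L^1}\le\|K_0\|_q$ uniformly in $j$, and the $K_j$ are dilates of $K_0$. Write $T_j$ for convolution with $K_j$, and let $S_k$ denote convolution with a smooth group approximate identity at scale $2^k$. The two basic inputs are: (i) a \emph{uniform} $L^2$ bound $\|T_j\|_{L^2\to L^2}\lesssim\|K_0\|_q$, obtained along the lines of the Fourier-free arguments of Tao~\cite{Tao} and Sato~\cite{sato} on homogeneous groups; and (ii) a \emph{quantitative cancellation estimate} $\|T_j(I-S_{j-m})\|_{L^2\to L^2}\lesssim 2^{-\delta m}\|K_0\|_q$ for some $\delta=\delta(\mathbb{Q},q)>0$ and all $m\ge 0$, to be derived from the mean-zero property of $K_j$ and the dilation structure via a $TT^{*}$/Cotlar--Stein argument exploiting cancellation between annuli at separated scales (this plays the role of the Fourier-decay estimate of $\widehat{\Omega\,d\sigma}$ used in the Euclidean theory). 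With (i) and (ii) in hand, the mollified pieces $T_jS_{j-m}$ behave like smooth Calder\'on--Zygmund kernels at scale $2^j$ with a controlled loss in the regularity modulus, so the usual stopping-time construction (Lerner--Nazarov/Lacey, adapted to $\HH$) produces a sparse family with constant polynomial in $m$; one then sums over $j$ and over the defect parameter $m\ge0$, using the geometric decay $2^{-\delta m}$ from (ii) against the polynomial loss in $m$, arriving at the displayed bound with total constant $\lesssim\|K_0\|_q$ and an exponent $r_0$ governed by $\delta$, hence by $q$.

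The main obstacle is step (ii): on a general homogeneous group there is no usable Fourier transform, so the geometric $L^2$-decay of the mollified error operators $T_j(I-S_{j-m})$ must be extracted purely from the mean-zero condition together with the group dilations, and it must be quantified so that the dependence of $\delta$ on $q$ — hence of the threshold $c_{\mathbb{Q},p}$ — is explicit. A secondary technical point is the bookkeeping of constants so that only $\|K_0\|_q$ (and not $\|K_0\|_\infty$) enters: since each annular piece contributes a non-small amount, the summability over scales and over $m$ must come entirely from (ii), and the passage from the $L^2$ estimates to the local $L^r$ sparse bound may lose only a factor polynomial in the scale separation. The remaining ingredients — the dyadic grid on $\HH$, the reduction of the operator norm to a sparse form, and the sharp $L^p(w)$ bounds for $(1,r)$-sparse operators on spaces of homogeneous type (the origin of the quantities $\{w\}_{A_p}$ and $(w)_{A_p}$) — are by now standard and can be quoted from the literature.
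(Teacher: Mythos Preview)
Your proposal takes a genuinely different route from the paper. The paper follows the HRT template: it decomposes $T=\sum_{j\ge 0}\tilde T_j^{N}$ into Dini-type Calder\'on--Zygmund pieces, applies the (black-box) sparse/weighted bound for Dini-CZ operators to each piece, and then combines the resulting \emph{bad} weighted bound with a \emph{good} unweighted $L^p$ bound via Stein--Weiss interpolation with change of measure; the condition $q>c_{\mathbb Q,p}(w)_{A_p}$ emerges from the final summation in $j$. You instead aim for a single $(1,r)$-sparse domination of $T$ in the style of Conde-Alonso--Culiuc--Di Plinio--Ou and Lerner, and then invoke weighted bounds for $(1,r)$-sparse forms. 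Both routes rely on the same hard analytic input---your (ii), i.e.\ the Fourier-free Tao/Sato $L^2$ decay $\|T_j(I-S_{j-m})\|_{2\to 2}\lesssim 2^{-\delta m}\|K_0\|_q$---so the essential difficulty is correctly identified.

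There is, however, a genuine gap in your execution when $q<\infty$. You assert that the mollified pieces $T_jS_{j-m}$ (and hence the operators $\sum_j T_j(S_{j-m}-S_{j-m+1})$) produce a sparse bound with constant \emph{polynomial in $m$}. This is false: the Calder\'on--Zygmund size constant and Dini modulus of these pieces grow like $2^{m\mathbb Q/q}$, not polynomially. The reason is that the fine-scale mollifier $\Delta[2^{j-m}]\phi$ has $L^{q'}$ norm $\sim 2^{(j-m)\mathbb Q/q'}$, and H\"older against the $L^q$ kernel $K_0$ leaves an unavoidable factor $2^{m\mathbb Q/q}$ in the pointwise kernel bound (this is exactly what the paper computes in its Lemma~\ref{cal}, and is the ``extra bad factor $2^{N(j)\mathbb Q/q}$'' flagged in the introduction). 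Consequently the constant entering any Lerner/Lacey stopping-time sparse bound for the $m$-th piece is $\sim 2^{m\mathbb Q/q}(1+m)\|K_0\|_q$, and your proposed summation ``geometric decay $2^{-\delta m}$ against polynomial loss in $m$'' does not converge at the level of the $(1,1)$-sparse constant. The paper absorbs this exponential loss precisely through the Stein--Weiss step, which trades a small power of the weight (with $\varepsilon\sim 1/(w)_{A_p}$) for the good $L^2$ decay and forces the condition $q>c_{\mathbb Q,p}(w)_{A_p}$. To make your sparse route work you would need either to move to $(q',r)$-type sparse forms (as in Li--P\'erez--Rivera-R\'ios--Roncal, which alters the weighted conclusion) or to insert an interpolation step that converts the $L^2$ decay into decay of the $L^r$ operator norm entering the sparse constant, and then track carefully how the exponent $r_0$ and the threshold on $q$ arise; as written, this mechanism is missing.
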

Comparing with the previous closely related results, we point out that the weight bound $\{w\}_{A_p}(w)_{A_p}$ we obtained is consistent with that obtained in \cite{HRT}. It is still unknown that whether this is sharp, but it is the best known quantitative result for this class of operators.


To show Theorem \ref{main}, we borrow the idea from \cite{HRT} to divide the proof into two steps:


\smallskip
$\bullet$
In the first step, noting that the Fourier transform is not applicable in general homogeneous groups,
we  decompose the operator $T$ into a summation of Dini-type Calder\'{o}n-Zygmund operators $\tilde{T}_{j}^{N}$ defined by \eqref{b2}, which is slightly different from \cite{HRT} but is crucial to implement the iterated $TT^{*}$ argument from \cite{Tao}.
Then we combine Cotlar-Knapp-Stein Lemma with a key $L^{2}$ estimate originated from \cite{Tao} (later extended by \cite{sato}) to show the unweighted $L^{2}$ estimate for $\tilde{T}_{j}^{N}$, that is, for any $j\geq 1$,
\begin{align*}
\|\tilde{T}_{0}(f)\|_{2}\lesssim 2^{-\alpha j}\|K_{0}\|_{q}\|f\|_{2},\hskip1cm
\|\tilde{T}_{j}^{N}(f)\|_{2}\lesssim 2^{-\alpha N(j-1)}\|K_{0}\|_{q}\|f\|_{2}.
\end{align*}

\smallskip
$\bullet$
In the last step, we prove a quantitative $L^{p}$ weighted inequality and a quantitative good unweighted $L^{p}$ estimate for $\tilde{T}_{j}^{N}$, both of which contains an extra bad factor $2^{\frac{N(j)\mathbb{Q}}{q}}$, that is,
\begin{align*}
&\|\tilde{T}_{j}^{N}(f)\|_{L^{p}(w)}\lesssim 2^{\frac{N(j)\mathbb{Q}}{q}}(1+N(j))\|K_{0}\|_{q}\{w\}_{A_{p}}\|f\|_{L^{p}(w)},\\
&\|\tilde{T}_{j}^{N}(f)\|_{L^{p}} \lesssim2^{-\beta_{p}N(j-1)}2^{\frac{N(j)\mathbb{Q}}{q}}(1+N(j))\|K_{0}\|_{q}\|f\|_{L^{p}}.
\end{align*}
Finally, our Theorem \ref{main} can be proven after choosing appropriate $N(j)$ and repeating a standard argument of interpolation theorem with change of measures.

As a direct application, we obtain the quantitative weighted bound for the rough singular integrals studied by Sato \cite{sato}.
To state our result, we first recall some notations introduced in \cite{sato}.

For $q\geq 1$, let $d_{q}$ denote the collection of measurable functions $h$ on $\mathbb{R}_{+}=\{t\in\mathbb{R}:t>0\}$ satisfying
$
\|h\|_{d_{q}}=\sup\limits_{j\in\mathbb{Z}}\left(\int_{2^{j}}^{2^{j+1}}|h(t)|^{q}\frac{dt}{t}\right)^{1/q}<\infty,
$
We define $\|h\|_{d_{\infty}}:=\|h\|_{L^{\infty}(\mathbb{R}_{+})}$. Besides, for $t\in(0,1]$, let
$
u(h,t):=\sup\limits_{|s|<tR/2}\int_{R}^{2R}|h(r-s)-h(r)|\frac{dr}{r},
$
where the supremum is taken over all $s$ and $R$ such that $|s|<tR/2$. For $\eta>0$, let $\Lambda^{\eta}$ denote the family of functions $h$ such that
$\|h\|_{\Lambda^{\eta}}:=\sup\limits_{t\in (0,1]}t^{-\eta}u(h,t)<\infty.
$
Define $\Lambda_{q}^{\eta}:=d_{q}\cap\Lambda^{\eta}$ and set $\|h\|_{\Lambda_{q}^{\eta}}:=\|h\|_{d_{q}}+\|h\|_{\Lambda^{\eta}}$ for $h\in \Lambda_{q}^{\eta}$.
Consider
\begin{align*}
T(f)(x)={\rm p.v.}\ f\ast L(x)={\rm p.v.}\int_{\HH}f(y)L(y^{-1}x)dy,
\end{align*}
where $L(x):=h(\rho(x))K(x)$ and $K$ is defined in Section \ref{se1}. Then we have
\begin{theorem}\label{main2}
Let $1<p<\infty$, $w\in A_{p}$. Suppose that $K_{0}$ has mean zero and there exists a constant $c_{\mathbb{Q},p}>0$ such that $K_{0}\in L^{q}$ for some $q>c_{\mathbb{Q},p}(w)_{A_{p}}$. Suppose that $h\in \Lambda_{q}^{\eta/q^{\prime}}$ for some $\eta>0$, then
\begin{align*}
\|T\|_{L^{p}(w)\rightarrow L^{p}(w)}\leq C_{\mathbb{Q},p,q}\|K_{0}\|_{q}\|h\|_{\Lambda_{q}^{\eta/q^{\prime}}}\{w\}_{A_p}(w)_{A_p}.
\end{align*}
for some constant $C_{\mathbb{Q},p,q}$ independent of $w$.
\end{theorem}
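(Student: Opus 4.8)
The plan is to deduce Theorem \ref{main2} from Theorem \ref{main} by exploiting the extra decay encoded in the radial multiplier $h$, essentially by reconstructing $T$ from dilated copies of operators to which Theorem \ref{main} already applies. First I would perform a Littlewood--Paley--type decomposition in the radial variable: writing $\mathbf 1_{[1,2)}$ adapted to the dilation group, decompose $h(\rho(x)) = \sum_{k\in\ZZ} h(\rho(x))\mathbf 1_{[2^{k},2^{k+1})}(\rho(x))$, so that $L = \sum_{k} L_{k}$ where $L_{k}$ is supported on the annulus $2^{k}\le \rho(x)<2^{k+1}$. Each building block is, after the natural parabolic rescaling $\delta_{2^{-k}}$, a convolution operator whose kernel restricted to the unit annulus $\Sigma_0$ is $h(2^{k}\cdot)K_0$ up to the dilation, with $L^{q}$ norm controlled by $\|h\|_{d_q}\|K_0\|_{q}$ uniformly in $k$. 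The point is that $h(2^k \cdot)$ on $[1,2)$ need \emph{not} have mean zero on $\Sigma_0$ even though $K_0$ does, so one cannot quote Theorem \ref{main} directly for each piece; instead one must further split each annular piece into a mean-zero part (handled by Theorem \ref{main}) and a ``radial average'' part.

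The technical heart is therefore the mean-zero correction. For each $k$ I would write $h(2^{k}r) = a_{k} + (h(2^{k}r)-a_{k})$ on $r\in[1,2)$, choosing $a_k$ to be the average of $h(2^k\cdot)$ over $[1,2)$ with respect to $dr/r$; then $a_k K_0$ inherits the mean-zero property of $K_0$ on the sphere $\Sigma$, so $\sum_k a_k L_k$ is, up to rescaling each annular block by $\delta_{2^k}$ and reassembling, dominated using Theorem \ref{main} together with a Cotlar--Knapp--Stein or almost-orthogonality argument in $k$ — the necessary $\ell^2$ decay in $k$ is exactly what $h\in d_q$ and the cancellation of $K_0$ provide, via the same type of $L^2$ estimate originating from \cite{Tao} and extended by \cite{sato} that is invoked in the proof of Theorem \ref{main}. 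The remainder $\sum_k (h(2^k r)-a_k)L_k$ has the radial oscillation built in: here the smoothness hypothesis $h\in\Lambda^{\eta/q'}$ supplies a gain of $2^{-\varepsilon|k|}$-type (more precisely, control of the modulus $u(h,t)$) that, combined again with the cancellation of $K_0$, yields an almost-orthogonal family with summable weighted bounds. In both cases the weight constant that survives the summation is $\{w\}_{A_p}(w)_{A_p}$, since each summand already obeys such a bound by Theorem \ref{main} and the geometric decay in $k$ is $A_p$-quantitatively harmless.

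Concretely, the steps in order are: (i) set up the annular decomposition $L=\sum_k L_k$ and record the rescaling identity relating $L_k$ to a unit-scale operator; (ii) split $L_k$ into $a_k L_k^{0}$ (mean-zero factor) plus a remainder, and bound $\|L_k^{0}\|_{d_q}$, $\|\text{remainder}\|$ in terms of $\|h\|_{\Lambda_q^{\eta/q'}}$ and $\|K_0\|_q$; (iii) apply Theorem \ref{main} to the rescaled version of the mean-zero pieces to get, for each $k$, a weighted bound $\lesssim |a_k|\,\|K_0\|_q\{w\}_{A_p}(w)_{A_p}$, and likewise a weighted bound for the remainder pieces with an additional factor controlled by $u(h,2^{-|k|})$ or $\|h\|_{\Lambda^{\eta/q'}}2^{-\eta|k|/q'}$; (iv) sum over $k\in\ZZ$ using an $L^p(w)$ almost-orthogonality / vector-valued argument — here one needs that $A_p$ weights survive dilation with uniformly bounded characteristic, which holds since the dilations are automorphisms of the measure space — to conclude $\|T\|_{L^p(w)\to L^p(w)}\lesssim (\sum_k |a_k| + \sum_k 2^{-\eta|k|/q'})\|K_0\|_q\{w\}_{A_p}(w)_{A_p}\lesssim \|h\|_{\Lambda_q^{\eta/q'}}\|K_0\|_q\{w\}_{A_p}(w)_{A_p}$.

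The main obstacle I anticipate is step (iv): making the summation over $k$ quantitative in the weight. A naive triangle inequality loses nothing in the $A_p$ constant only if the $\ell^1$ sum of the coefficients converges, which is fine for the $\Lambda^{\eta/q'}$ remainder but \emph{fails} for the $d_q$ part, where $\sum_k|a_k|$ need not converge — one genuinely needs an $L^2$-type orthogonality in $k$ (Cotlar--Knapp--Stein, as already used for the pieces $\tilde T_j^N$ in the proof of Theorem \ref{main}) and then a Rubio de Francia extrapolation or a direct interpolation-with-change-of-measure argument to transfer the unweighted $\ell^2$-orthogonality plus the weighted $\ell^1$-bound into a single weighted estimate without degrading the exponent on $\{w\}_{A_p}(w)_{A_p}$. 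Getting the bookkeeping right so that the final power of the weight characteristic is exactly the one in Theorem \ref{main}, rather than something worse, is the delicate point; everything else is a routine adaptation of the rescaling and almost-orthogonality machinery already developed for Theorem \ref{main}.
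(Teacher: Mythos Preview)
Your approach differs from the paper's and contains a genuine gap. The paper does \emph{not} treat Theorem~\ref{main} as a black box; instead it reruns the entire proof of Theorem~\ref{main} with the averaging operator $A_j$ replaced by $B_jF = 2^{-j}h(\rho(x))\int_0^\infty \varphi(2^{-j}t)\Delta[t]F\,dt$, so that the radial factor $h$ is absorbed into the building blocks from the outset. The $L^2$ almost-orthogonality estimate (the analogue of \eqref{kkkey}) is then supplied directly by Sato's Lemma~1, which already has the $\Lambda_q^{\eta/q'}$ norm built in; the Calder\'on--Zygmund kernel estimates for $\tilde T_j^N$ pick up only an extra $\|h\|_{d_q}$ (with exponent $2N(j)\mathbb{Q}/q$ in place of $N(j)\mathbb{Q}/q$, coming from a three-factor H\"older); and the final interpolation-with-change-of-measure step is unchanged.

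The gap in your plan is exactly where you locate it, and it is not a bookkeeping issue. Theorem~\ref{main} concerns the \emph{full} homogeneous singular integral built from a single $K_0$ by assembling all dyadic dilates; it says nothing about a single annular truncation, nor about $\sum_k a_k T_k$ with nonconstant $a_k$. Your rescaled ``mean-zero piece'' at level $k$ is simply convolution with $a_k K_0$ on the unit annulus, which is not an operator to which Theorem~\ref{main} applies. The only bound one gets for each such piece is the trivial uniform one from Young's inequality, and those do not sum. To recover summability you need almost-orthogonality in $k$, and the only available source of that orthogonality is precisely the Cotlar--Knapp--Stein estimate with the Littlewood--Paley localization and the cancellation of $K_0$ --- i.e., the internal machinery of the proof of Theorem~\ref{main}, now with $h$ carried along. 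Once you open that box you are doing the paper's argument, not yours. Your proposed remedy (unweighted $\ell^2$-orthogonality plus interpolation with change of measure) is in fact the correct mechanism, but it cannot be executed at the level of annular pieces of $L$; it must be executed at the level of the pieces $\tilde T_j^N$ after the Littlewood--Paley decomposition, with $h$ already inside.
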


We also have an investigation on the quantitative weighted estimate for bi-parameter rough singular integrals on the product homogeneous groups
$\HH_{1}\times\HH_{2}$. Recall that the Euclidean version was introduced by R. Fefferman \cite[page 198]{RF1981} where $\Omega$ is Lipschitz, and later studied by Duoandikoetxea \cite{Duo1986}. See also \cite{AP,AAP,sato2} for previous works about rough singular integrals on product of Euclidean spaces.
Consider the singular integral
\begin{align*}
Tf(x,y)={\rm p.v.}f\ast K(x,y)={\rm p.v.}\int_{\HH_{1}\times\HH_{2}}f(xu^{-1},yv^{-1})K(u,v)dudv,
\end{align*}
where $K(u,v)$ satisfies $$K(t_{1}\circ_{1}u,t_{2}\circ_{2}v)=t_{1}^{-\mathbb{Q}_{1}}t_{2}^{-\mathbb{Q}_{2}}K(u,v),$$
for all $t_{i}\in\mathbb{R}_{+}$ and $(u,v)\in\HH_{1}\times\HH_{2}$. For $i=1,2$, let $D_{0}^{(i)}=\{x_{i}\in\HH_{i}:1\leq \rho_{i}(x_{i})\leq 2\}$ and $D_{0}=D_{0}^{(1)}\times D_{0}^{(2)}$. Denote $x=(x_{1},x_{2})\in\HH_{1}\times\HH_{2}$ and $K^{0}(x)=K(x)\chi_{D_{0}}(x)$. In the bi-parameter setting, we also abuse the notation $w\in A_{p}$ to denote that $w$ is a product $A_{p}$ weight. We now state our result in the bi-parameter setting. For the sake of simplicity, we refer the readers to Section \ref{bi} for all details of definitions and notations.
\begin{theorem}\label{main3}
Let $w\in A_{2}$. Suppose that $K^{0}$ satisfies
\begin{align*}
\int_{D_{0}^{(1)}}K(u,v)du=\int_{D_{0}^{(2)}}K(u,v)dv=0, \ \ {\rm for}\ {\rm all}\ (u,v)\in D_{0},
\end{align*}
and there exists a constant $c_{\mathbb{Q}_1,\mathbb{Q}_2}>0$ such that $K^{0}\in L^{q}(D_{0})$ for some $q>c_{\mathbb{Q}_1,\mathbb{Q}_{2}}(w)_{A_{2}}$, then
\begin{align*}
\|T\|_{L^{2}(w)\rightarrow L^{2}(w)}\leq C_{\mathbb{Q}_1,\mathbb{Q}_2,q}\max\{\|K^{0}\|_{q},\|K^{0}\|_{q}^{2}\}[w]_{A_2}^{12}[w]_{A_\infty}^2
\end{align*}
for some constant $C_{\mathbb{Q}_1,\mathbb{Q}_2,q}$ independent of $w$.
\end{theorem}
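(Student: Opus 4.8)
The plan is to transplant into the product setting the two-step scheme behind Theorem~\ref{main}, carrying out the one-parameter estimates in each factor. First I would perform a bi-parameter dyadic decomposition of $K$: writing
\[
K=\sum_{j_1,j_2\ge0}K_{j_1,j_2},\qquad K_{j_1,j_2}(u,v)=K(u,v)\,\chi_{\{2^{j_1}\le\rho_1(u)<2^{j_1+1}\}}(u)\,\chi_{\{2^{j_2}\le\rho_2(v)<2^{j_2+1}\}}(v),
\]
the separate homogeneity of $K$ shows each $K_{j_1,j_2}$ is an anisotropic dilate of $K^0=K_{0,0}$, so both its $L^q$ size and its double mean-zero cancellation are controlled by those of $K^0$; this splits $T=\sum_{j_1,j_2\ge0}T_{j_1,j_2}$. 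Following \cite{HRT} and Step~1 of the proof of Theorem~\ref{main}, I would then replace each $T_{j_1,j_2}$ by a smoothed operator $\tilde T^{N_1,N_2}_{j_1,j_2}$ obtained by convolving $K_{j_1,j_2}$ in the $i$-th variable against a bump at scale $2^{j_i-N_i}$. The point is that $\tilde T^{N_1,N_2}_{j_1,j_2}$ is a genuine product Calder\'on--Zygmund operator satisfying a product Dini condition, the Dini modulus in the $i$-th variable being of order $(1+N_i)\,2^{N_i\mathbb{Q}_i/q}\,\|K^0\|_q$ --- the factor $2^{N_i\mathbb{Q}_i/q}$ being the familiar price for converting an $L^q$ bound on the kernel into pointwise Dini control, exactly as in the one-parameter case.

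The second ingredient is a quantitative unweighted $L^2$ decay estimate for the smoothed pieces,
\[
\|\tilde T^{N_1,N_2}_{j_1,j_2}f\|_{2}\;\lesssim\;2^{-\alpha N_1(j_1-1)}\,2^{-\alpha N_2(j_2-1)}\,\|K^0\|_q\,\|f\|_{2},
\]
with the obvious modification when $j_1=0$ or $j_2=0$. I would obtain this by a bi-parameter Cotlar--Knapp--Stein argument: one establishes product almost-orthogonality, namely bounds of the form
\[
\|(\tilde T^{N_1,N_2}_{j_1,j_2})^{*}\tilde T^{N_1,N_2}_{k_1,k_2}\|_{2\to2}+\|\tilde T^{N_1,N_2}_{j_1,j_2}(\tilde T^{N_1,N_2}_{k_1,k_2})^{*}\|_{2\to2}\;\lesssim\;2^{-\epsilon(|j_1-k_1|+|j_2-k_2|)}\,\|K^0\|_q^{2},
\]
exploiting the one-parameter cancellation of $K^0$ in each variable --- and the fact that, by Fubini, the slices of $K^0$ are controlled in $L^q$ by $\|K^0\|_q$ --- together with the key one-parameter $L^2$ estimate of Tao \cite{Tao}, later extended by Sato \cite{sato}; a product version of the Cotlar--Stein lemma then sums these contributions. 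This is the genuinely two-parameter part of the argument and I expect it to be the main obstacle: since $K^0$ does not factor as a tensor product, one cannot reduce to the one-parameter theorem by freezing a variable, so the almost-orthogonality estimates must be run in both parameters at once while keeping all constants uniform in the frozen data.

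The third ingredient is a quantitative bi-parameter weighted bound for a single smoothed piece: since $\tilde T^{N_1,N_2}_{j_1,j_2}$ is a product Dini--Calder\'on--Zygmund operator, I would invoke the known quantitative weighted estimate for such operators on product spaces --- available via bi-parameter sparse domination, or via a product-space representation theorem together with quantitative weighted bounds for bi-parameter dyadic shifts and paraproducts --- to get
\[
\|\tilde T^{N_1,N_2}_{j_1,j_2}f\|_{L^2(w)}\;\lesssim\;(1+N_1)(1+N_2)\,2^{N_1\mathbb{Q}_1/q}\,2^{N_2\mathbb{Q}_2/q}\,\|K^0\|_q\,[w]_{A_2}^{a}[w]_{A_\infty}^{b}\,\|f\|_{L^2(w)}
\]
for fixed exponents $a,b$ that I would track through that argument. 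Interpolating this against the unweighted decay estimate by Stein--Weiss interpolation with change of measure --- writing $w$ as a small power of a weight whose $A_2$ constant is comparable to $[w]_{A_2}$, which is where the openness of $A_2$ and an extra $[w]_{A_\infty}$ factor enter --- yields, for each $(j_1,j_2)$, a bound with a genuine exponential gain $2^{-\beta N_1(j_1-1)}\,2^{-\beta N_2(j_2-1)}$ set against the bad factors $2^{N_i\mathbb{Q}_i/q}$. It then remains to choose $N_i=N_i(j_i)$ growing linearly in $j_i$, with slope a suitable multiple of $q/\mathbb{Q}_i$ so that the hypothesis $q>c_{\mathbb{Q}_1,\mathbb{Q}_2}(w)_{A_2}$ forces the net exponent of $j_i$ to be negative; summing the resulting geometric series over $j_1,j_2\ge0$ then produces the stated estimate. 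The appearance of $\max\{\|K^0\|_q,\|K^0\|_q^{2}\}$ reflects that the two inputs to the interpolation are respectively linear and --- in the worst bi-parameter weighted estimate --- up to quadratic in $\|K^0\|_q$, and the final powers $[w]_{A_2}^{12}[w]_{A_\infty}^{2}$ are what the bookkeeping above delivers once $a$, $b$ and the interpolation overhead are combined.
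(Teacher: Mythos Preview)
Your high-level strategy matches the paper's: decompose $T$, prove an unweighted $L^2$ decay estimate for the pieces, prove a quantitative weighted bound for each piece via the bi-parameter representation theorem of \cite{AMV} together with the sparse bounds for shifted square functions in \cite{BP}, then interpolate with change of measure and sum. Those later steps are essentially what the paper does. The gap is in your decomposition: it is not the one in \cite{HRT} or in Section~\ref{keey}, and as you have set it up the claimed $L^2$ decay cannot hold.

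In the paper one first writes $T=\sum_{(k_1,k_2)\in\mathbb{Z}^2}T_{k_1,k_2}$ by kernel scale (your first display, except the sum must run over all of $\mathbb{Z}^2$), and \emph{then} expands each $T_{k_1,k_2}$ via a product Littlewood--Paley identity. Regrouping gives
\[
\tilde T^{N}_{j_1,j_2}\;=\;\sum_{k_1,k_2\in\mathbb{Z}} T_{k_1,k_2}\,\Delta^{N(j_1),N(j_2)}_{k_1,k_2},
\]
so each $\tilde T^{N}_{j_1,j_2}$ already contains \emph{all} kernel scales; the index $(j_1,j_2)$ records only the gap between the kernel scale $2^{k_i}$ and the mollification scale $2^{k_i-N(j_i)}$. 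Cotlar--Knapp--Stein is then applied, for \emph{fixed} $(j_1,j_2)$, in the $(k_1,k_2)$ variables; the decay $2^{-\alpha N(j_i-1)}$ comes not from almost-orthogonality but from the bi-parameter version of Tao's key estimate \eqref{kkkey} (see \cite[Lemma~1]{sato2}), which controls $\|f\ast A_{k_1,k_2}K^{0}\ast\Psi_{\ell_1,\ell_2}\|_2$ by $2^{-\alpha|k_1-\ell_1|}2^{-\alpha|k_2-\ell_2|}\|K^{0}\|_q\|f\|_2$.

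In your scheme, by contrast, $\tilde T^{N_1,N_2}_{j_1,j_2}$ is a single mollified annular piece. By dilation invariance of the $L^2\to L^2$ norm, all such pieces (for fixed $N_1,N_2$) have the \emph{same} operator norm, and as $N_i\to\infty$ that norm tends to $\|f\mapsto f\ast K^{0}\|_{2\to2}\neq0$; so no bound of the form $2^{-\alpha N_1(j_1-1)}2^{-\alpha N_2(j_2-1)}$ is available. The Cotlar--Stein estimate you propose, between $\tilde T^{N_1,N_2}_{j_1,j_2}$ and $\tilde T^{N_1,N_2}_{k_1,k_2}$, would at best bound the full sum $\sum_{j_1,j_2}\tilde T^{N_1,N_2}_{j_1,j_2}$, not produce decay of the individual terms --- leaving nothing to feed into the interpolation. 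The fix is to introduce the second (frequency-gap) layer of the decomposition and run Cotlar--Stein in the scale variables, exactly as in the paper.
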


Based on the framework of the proof of Theorem \ref{main}, the key idea to prove Theorem \ref{main3} is
to decompose the bi-parameter rough singular integral $T$
into a summation of bi-parameter Dini-type Calder\'on--Zygmund operators $\tilde T_{j_1,j_2}^N$ with the modified Dini-1 condition as in \cite[Section 5]{AMV} and with the cancellation on the kernel $K_{j_1,j_2}^N$.

Then we have the standard bi-parameter representation theorem for each $\tilde T_{j_1,j_2}^N$, which, together with \cite[Corollary 3.2]{BP} and the sparse domination for the Shifted Square Function \cite[Section 5]{BP}, gives
Theorem \ref{main3}.

It is not clear whether the quantitative estimate appearing in Theorem \ref{main3} can be pushed down further using our methods.

This paper is organized as follows. In Section \ref{preliminariessec} we provide the preliminaries, including the fundamental properties of the Muckenhoupt $A_{p}$ weights and the definition of Calder\'{o}n-Zygmund operators with Dini-continuous kernel. In Section \ref{keey}, we prove our main result Theorem \ref{main}. In Section \ref{satosec}, we prove Theorem \ref{main2}, the quantitative weighted bound of the rough singular integrals studied by Sato \cite{sato}. In the last section we investigate the quantitative weighted bound in the bi-parameter setting.

\bigskip
\section{Preliminaries}\label{preliminariessec}
\setcounter{equation}{0}
\subsection{Muckenhoupt $A_{p}$ weights}
To begin with, we define a left-invariant quasi-distance $d$ on $\HH$ by $d(x,y)=\rho(x^{-1}y)$, which means that
there exists a constant $A_{0}\geq 1$ such that for any $x,y,z\in\HH$,
\begin{align*}
d(x,y)\leq A_{0}[d(x,z)+d(z,y)].
\end{align*}
Next, let $B(x,r):=\{y\in\HH :d(x,y)<r\}$ be the open ball with center $x\in\HH$ and radius $r>0$.

We denote the average of a function $f$ over a ball $B$ by
\begin{align*}
\langle f\rangle_{B}=\fint_{B}fdx=\frac{1}{|B|}\int_{B}f(x)dx,
\end{align*}
where $|B|$ denotes the Lebesgue measure of $B$.
\begin{definition}
Let $w(x)$ be a nonnegative locally integrable function
  on~$\HH$. For $1 < p < \infty$, we
  say that $w$ is an $A_p$ \emph{weight}, written $w\in
  A_p$, if
\begin{align*}
[w]_{A_{p}}:=\sup\limits_{B}\left(\fint_{B}wdx\right)\left(\fint_{B}\left(\frac{1}{w}\right)^{1/(p-1)}dx\right)^{p-1}<\infty,
\end{align*}
where the supremum is taken over all balls~$B\subset \HH$. The quantity $[w]_{A_p}$ is called the \emph{$A_p$~constant
  of~$w$}.
  For $p = 1$, if $M(w)(x)\leq w(x)$ for a.e. $x\in \HH$, then we say that $w$ is an $A_1$ \emph{weight},
  written $w\in A_1$, where $M$ denotes the Hardy-Littlewood maximal function. Besides, let $A_\infty := \cup_{1\leq p<\infty} A_p$ and we have
\begin{align*}
[w]_{A_{\infty}}:=\sup\limits_{B}\left(\fint_{B}wdx\right){\rm exp}\left(\fint_{B}{\rm log}\left(\frac{1}{w}\right)dx\right)<\infty.
\end{align*}
\end{definition}
In order to state our weighted estimates much more efficiently, we recall the following variants of the weight characteristic (see for example \cite{HRT}):
\begin{align*}
\{w\}_{A_{p}}:=[w]_{A_{p}}^{1/p}{\rm max}\{[w]_{A_{\infty}}^{1/p^{\prime}},[w^{1-p^{\prime}}]_{A_{\infty}}^{1/p}\},
\quad (w)_{A_{p}}:={\rm max}\{[w]_{A_{\infty}},[w^{1-p^{\prime}}]_{A_{\infty}}\}.
\end{align*}
\begin{lemma}\label{wei2}
Let $1<p<\infty$, and $w\in A_{p}$. Then there exists a constant $c_{\mathbb{Q}}$ small enough such that for every $0<\delta\leq c_{\mathbb{Q}}/(w)_{A_{p}}$, we have that $w^{1+\delta/2}\in A_{p}$ and
\begin{align*}
(w^{1+\delta/2})_{A_{p}}\leq C_{\mathbb{Q}}(w)_{A_{p}}^{1+\delta/2}, \ \ \{w^{1+\delta/2}\}_{A_{p}}\leq C_{\mathbb{Q}}\{w\}_{A_{p}}^{1+\delta/2}.
\end{align*}
\end{lemma}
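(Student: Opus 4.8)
The plan is to deduce both inequalities from the sharp (quantitative) reverse Hölder inequality for $A_{\infty}$ weights on the space of homogeneous type $(\HH,d,dx)$: there is a dimensional constant $\tau_{\mathbb{Q}}$ so that every $u\in A_{\infty}$ satisfies
\[
\langle u^{r_{u}}\rangle_{B}^{1/r_{u}}\le 2\,\langle u\rangle_{B}\qquad\text{for all balls }B,\quad\text{where }r_{u}:=1+\frac{1}{\tau_{\mathbb{Q}}\,[u]_{A_{\infty}}}.
\]
I would apply this to $u=w$ and, simultaneously, to the dual weight $\sigma:=w^{1-p'}$. Both lie in $A_{\infty}$, and by the very definition of $(w)_{A_{p}}$ we have $[w]_{A_{\infty}}\le (w)_{A_{p}}$ and $[\sigma]_{A_{\infty}}\le (w)_{A_{p}}$; this is the observation that lets a \emph{single} threshold serve for both weights. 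Choosing $c_{\mathbb{Q}}:=\min\{2/\tau_{\mathbb{Q}},1\}$, the hypothesis $0<\delta\le c_{\mathbb{Q}}/(w)_{A_{p}}$ forces $1+\delta/2\le\min\{r_{w},r_{\sigma}\}$ and also $\delta\le 1$.

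First I would record, via Jensen's (power-mean) inequality followed by the reverse Hölder inequality, the pointwise-in-$B$ bounds
\[
\langle w^{1+\delta/2}\rangle_{B}\le 2^{1+\delta/2}\langle w\rangle_{B}^{1+\delta/2},\qquad
\langle\sigma^{1+\delta/2}\rangle_{B}\le 2^{1+\delta/2}\langle\sigma\rangle_{B}^{1+\delta/2}.
\]
Since $(w^{1+\delta/2})^{1-p'}=\sigma^{1+\delta/2}$, multiplying these and taking the supremum over $B$ gives $[w^{1+\delta/2}]_{A_{p}}\le 2^{p(1+\delta/2)}[w]_{A_{p}}^{1+\delta/2}<\infty$, so in particular $w^{1+\delta/2}\in A_{p}$. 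For the $A_{\infty}$ characteristics I would use the definition of $[\cdot]_{A_{\infty}}$ given above together with the homogeneity $\exp\!\big((1+\delta/2)\langle\log w^{-1}\rangle_{B}\big)=\big(\exp\langle\log w^{-1}\rangle_{B}\big)^{1+\delta/2}$; combined with the first displayed bound this yields $[w^{1+\delta/2}]_{A_{\infty}}\le 2^{1+\delta/2}[w]_{A_{\infty}}^{1+\delta/2}$ and, in the same way, $[\sigma^{1+\delta/2}]_{A_{\infty}}\le 2^{1+\delta/2}[\sigma]_{A_{\infty}}^{1+\delta/2}$. Taking the maximum of these two and using $\max\{a^{r},b^{r}\}=(\max\{a,b\})^{r}$ gives $(w^{1+\delta/2})_{A_{p}}\le 2^{1+\delta/2}(w)_{A_{p}}^{1+\delta/2}$.

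Finally I would substitute the three estimates just obtained into the definition of $\{\cdot\}_{A_{p}}$ and again use $\max\{a^{r},b^{r}\}=(\max\{a,b\})^{r}$ to reach $\{w^{1+\delta/2}\}_{A_{p}}\le 2^{2(1+\delta/2)}\{w\}_{A_{p}}^{1+\delta/2}$. Because $\delta\le 1$, every exponential prefactor above is bounded by an absolute constant, so, absorbing the dimensional constant that enters through the geometry of $\HH$ in the reverse Hölder inequality, all of the bounds take the required form with a single $C_{\mathbb{Q}}$. I do not expect a genuine obstacle: the only nontrivial input is the quantitative reverse Hölder inequality with a purely dimensional exponent in the homogeneous-group setting, and the only point needing care is the bookkeeping that keeps $[w]_{A_{\infty}}$ and $[\sigma]_{A_{\infty}}$---hence the admissible range of $\delta$---both controlled by $(w)_{A_{p}}$.
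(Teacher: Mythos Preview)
Your proposal is correct and follows essentially the same approach as the paper: the paper's proof simply cites \cite[Corollary 3.18]{HRT} for the Euclidean argument and observes that the only nontrivial ingredient---the sharp reverse H\"older inequality with exponent $1+c/[u]_{A_\infty}$---is available on homogeneous groups by \cite{HPR}, while you have written out precisely those details. One small remark: the reverse H\"older inequality in \cite{HPR} is stated with the Fujii--Wilson $A_\infty$ constant rather than the Hru\v{s}\v{c}ev-type constant used in this paper's definition of $[w]_{A_\infty}$, but since the former is dominated (up to a dimensional factor) by the latter, your formulation with $\tau_{\mathbb Q}[u]_{A_\infty}$ is valid after absorbing that factor into $\tau_{\mathbb Q}$.
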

\begin{proof}
In the setting of Euclidean space, the proof was given in \cite[Corollary 3.18]{HRT}. For the case in homogeneous groups, it suffices to note that a similar sharp reverse H\"{o}lder inequality also holds (see \cite{HPR}).
\end{proof}

\subsection{Calder\'{o}n-Zygmund operators with Dini-continuous kernel}
Let $T$ be a bounded linear operator on $L^{2}(\HH)$ represented as
\begin{align*}
T(f)(x)=\int_{\HH}K(x,y)f(y)dy,\ \ \forall x\notin\supp f.
\end{align*}
A function $\omega:[0,1]\rightarrow [0,\infty)$ is a modulus of continuity if it satisfies the following three properties:

(1) $\omega(0)=0$;

(2) $\omega(s)$ is a increasing function;

(3) For any $s_{1},s_{2}>0$, $\omega(s_{1}+s_{2})\leq\omega(s_{1})+\omega(s_{2})$.
\begin{definition}
We say that the operator $T$ is an $\omega$-Calder\'{o}n-Zygmund operator if the kernel $K$ satisfies the following two condition:

(1) (size condition):
\begin{align*}
|K(x,y)|\leq \frac{C_{T}}{d(x,y)^{\mathbb{Q}}},
\end{align*}
for some constant $C_{T}>0$;

(2) (smoothness condition):
\begin{align*}
|K(x,y)-K(x^{\prime},y)|+|K(y,x)-K(y,x^{\prime})|\leq \omega\left(\frac{d(x,x^{\prime})}{d(x,y)}\right)\frac{1}{d(x,y)^{\mathbb{Q}}}
\end{align*}
for $d(x,y)\geq 2A_{0}d(x,x^{\prime})>0$.
\end{definition}

Moreover, $K$ is said to be a \textit{Dini-continuous kernel} if $\omega$ satisfies the \textit{Dini condition}:
\begin{align*}
\|\omega\|_{{\rm Dini}}:=\int_{0}^{1}\omega(s)\frac{ds}{s}<\infty.
\end{align*}
\begin{lemma}\label{domin}
Let $T$ be an $\omega$-Calder\'{o}n-Zygmund operator with $\omega$ satisfying the Dini condition. Then for any $p\in(1,\infty)$ and $w\in A_{p}$, we have
\begin{align}
\|T(f)\|_{L^{p}(w)}\leq C_{\mathbb{Q},p}(\|T\|_{L^{2}\rightarrow L^{2}}+C_{T}+\|\omega\|_{{\rm Dini}})[w]_{A_{p}}^{\max\{\frac{1}{p-1},1\}}\|f\|_{L^{p}(w)}.
\end{align}
\end{lemma}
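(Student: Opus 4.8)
The plan is to deduce Lemma~\ref{domin} from a pointwise sparse domination for $T$ combined with the known sharp weighted estimate for sparse operators, exactly as in the Euclidean proof of the Dini--$A_{2}$ theorem. This is legitimate in the present generality because $(\HH,d,dx)$ is a space of homogeneous type: since $dx$ is the Haar measure of $\HH$ one has $|B(x,r)|=c_{\HH}\,r^{\mathbb{Q}}$, so $d$ is doubling, and hence $\HH$ admits a finite collection of adjacent dyadic systems $\mathcal{D}^{1},\dots,\mathcal{D}^{\kappa}$ (Hyt\"onen--Kairema) with the property that every ball $B$ lies in some $Q\in\mathcal{D}^{t}$ with $\operatorname{diam}Q\le C_{\mathbb{Q}}\operatorname{diam}B$. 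Throughout write $C_{K}:=\|T\|_{L^{2}\to L^{2}}+C_{T}+\|\omega\|_{\mathrm{Dini}}$.

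First I would record the two weak-type ingredients. Since $K$ obeys the size bound and the $\omega$-smoothness bound with $\omega$ Dini, the classical Calder\'on--Zygmund decomposition (valid on spaces of homogeneous type) gives $T:L^{1}\to L^{1,\infty}$ with norm $\lesssim C_{K}$. Next, for the grand maximal truncation operator
\begin{align*}
\mathcal{M}_{T}f(x):=\sup_{B\ni x}\ \esssup_{\xi\in B}\bigl|T\bigl(f\chi_{\HH\setminus \lambda B}\bigr)(\xi)\bigr|,
\end{align*}
with a fixed dilation factor $\lambda=\lambda(A_{0})$ and the supremum over all balls $B\ni x$, the $\omega$-smoothness condition, summed over the dyadic annuli $\lambda 2^{k}B\setminus\lambda 2^{k-1}B$, yields the pointwise estimate $\mathcal{M}_{T}f(x)\lesssim |Tf(x)|+C_{K}\,Mf(x)$, the convergence of the annular sum being precisely the Dini condition; hence $\mathcal{M}_{T}:L^{1}\to L^{1,\infty}$ with norm $\lesssim C_{K}$ as well. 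With these bounds I would run the standard recursive stopping-time construction of Lerner--Nazarov and Lacey on a fixed $\mathcal{D}^{t}$: starting from $Q_{0}\in\mathcal{D}^{t}$ and $f$ supported on $Q_{0}$, one selects at each generation the maximal dyadic descendants on which $\langle|f|\rangle$ or $\mathcal{M}_{T}f$ exceeds a large multiple of its average, the two weak-type bounds ensuring that the selected descendants occupy at most half the measure of their parent. This produces a sparse family $\mathcal{S}\subset\mathcal{D}^{t}$ with
\begin{align*}
\bigl|T(f\chi_{Q_{0}})(x)\bigr|\le C_{\mathbb{Q}}\,C_{K}\sum_{Q\in\mathcal{S}}\langle|f|\rangle_{Q}\,\chi_{Q}(x)\qquad\text{a.e. on }Q_{0},
\end{align*}
and, after exhausting $\HH$ by such cubes and using the adjacent systems $\mathcal{D}^{1},\dots,\mathcal{D}^{\kappa}$ to replace each enlarged cube by a genuine dyadic one, the global bound $|Tf(x)|\le C_{\mathbb{Q}}\,C_{K}\sum_{t=1}^{\kappa}\mathcal{A}_{\mathcal{S}_{t}}(|f|)(x)$ a.e., where $\mathcal{A}_{\mathcal{S}}g:=\sum_{Q\in\mathcal{S}}\langle g\rangle_{Q}\chi_{Q}$ and each $\mathcal{S}_{t}$ is sparse.

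Finally I would invoke the sharp weighted bound for sparse operators: for any sparse family $\mathcal{S}$ in a space of homogeneous type, any $1<p<\infty$ and $w\in A_{p}$,
\begin{align*}
\|\mathcal{A}_{\mathcal{S}}g\|_{L^{p}(w)}\le C_{\mathbb{Q},p}\,[w]_{A_{p}}^{\max\{1,\frac{1}{p-1}\}}\|g\|_{L^{p}(w)},
\end{align*}
which is the Cruz-Uribe--Martell--P\'erez/Lerner--Moen estimate and whose proof transfers essentially verbatim to the homogeneous-type setting (using only doubling and the Hyt\"onen--Kairema lattices). Combining this with the sparse domination from the previous paragraph and summing the $\kappa=\kappa(\mathbb{Q})$ terms gives the asserted inequality. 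The step requiring genuine care is the weak-$(1,1)$ bound for $\mathcal{M}_{T}$, namely checking that its operator norm is controlled by $C_{T}$, $\|\omega\|_{\mathrm{Dini}}$ and $\|T\|_{L^{2}\to L^{2}}$ alone and that the Dini condition rather than a stronger H\"older regularity already suffices; everything else is a transcription of the by-now-routine Euclidean dyadic machinery to a doubling quasi-metric measure space.
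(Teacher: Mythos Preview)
Your proposal is correct and follows the same underlying route as the paper: sparse domination of $T$ on the space of homogeneous type $(\HH,d,dx)$, followed by the sharp weighted bound for sparse averaging operators. The only difference is one of presentation. The paper's proof is two lines: it invokes \cite[Theorem~6.1]{Lorist} to obtain
\[
\|T(f)\|_{L^{p}(w)}\le C_{\mathbb{Q},p}\bigl(\|T\|_{L^{1}\to L^{1,\infty}}+\|\omega\|_{\mathrm{Dini}}\bigr)[w]_{A_{p}}^{\max\{\frac{1}{p-1},1\}}\|f\|_{L^{p}(w)},
\]
and then bounds $\|T\|_{L^{1}\to L^{1,\infty}}$ by $\|T\|_{L^{2}\to L^{2}}+C_{T}+\|\omega\|_{\mathrm{Dini}}$ via the standard Calder\'on--Zygmund decomposition. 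What you have written is essentially an outline of the proof of Lorist's theorem itself (weak-$(1,1)$ for $T$ and for the grand maximal truncation $\mathcal{M}_{T}$, then the Lerner--Nazarov/Lacey stopping-time recursion, then Moen's bound for $\mathcal{A}_{\mathcal{S}}$). So your argument is self-contained where the paper simply cites, but the mathematics is the same; your caution about the $\mathcal{M}_{T}$ step is well placed and is exactly the point that Lorist's paper handles.
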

\begin{proof}
As a special case of \cite[Theorem 6.1]{Lorist},
\begin{align}
\|T(f)\|_{L^{p}(w)}\leq C_{\mathbb{Q},p}(\|T\|_{L^{1}\rightarrow L^{1,\infty}}+\|\omega\|_{{\rm Dini}})[w]_{A_{p}}^{\max\{\frac{1}{p-1},1\}}\|f\|_{L^{p}(w)}.
\end{align}
This, in combination with using a standard Calder\'{o}n-Zygmund decomposition argument to estimate the weak-type norm $\|T\|_{L^{1}\rightarrow L^{1,\infty}}$, completes the proof of Lemma \ref{domin}.
\end{proof}

\subsection{Notation of the paper}
For $1\leq p \leq+\infty$, we denote the norm of a function $f\in L^{p}(\HH)$ by $\|f\|_{p}$. If $T$ is a bounded linear operator
from $L^{p}(\HH)$ to $L^{q}(\HH)$, $1\leq p,q\leq+\infty$, we write $\|T\|_{p\rightarrow q}$ for the operator norm of $T$. The
indicator function of a subset $E\subseteq X$ is denoted by $\chi_{E}$. We use $A\lesssim B$ to denote the statement that $A\leq CB$ for some constant $C>0$.

\bigskip

\bigskip
\section{proof of Theorem \ref{main}}\label{keey}
\setcounter{equation}{0}

In this section, we will combine the ideas from \cite{HRT} and \cite{Tao} to show Theorem \ref{main}.
Throughout this section, we assume that $T$ is a rough homogeneous singular integral operator with $K_{0}$ satisfying the conditions in Theorem \ref{main}.
To show Theorem \ref{main}, the main difficulties
are the lack of suitable Fourier transforms and the convolution on homogeneous groups are not commutative in general. We combine the ideas from \cite{HRT} and \cite{Tao} via using Littlewood--Paley decompositions and Cotlar--Knapp--Stein lemma to overcome the difficulties.

To begin with, we recall that
for appropriate functions $f$ and $g$ on $\HH$, the convolution $f\ast g$ is defined by
\begin{align*}
f\ast g(x)=\int_{\HH}f(y)g(y^{-1}x)dy.
\end{align*}

\subsection{Kernel truncation and frequency localization}
To begin with, we first partion the kernel $K$ dyadically. Note that
\begin{align*}
K=\frac{1}{\ln2}\int_{0}^{\infty}\Delta[t]K_{0}\frac{dt}{t},
\end{align*}
where for each $t$, we define the scaling map $\Delta[t]$ by $$\Delta[t]f(x):=t^{-\mathbb{Q}}f(t^{-1}\circ x).$$
Therefore we have the decomposition
\begin{align*}
K=\sum\limits_{j\in\mathbb{Z}}A_{j}K_{0},
\end{align*}
where $A_{j}$ is the operator
\begin{align}\label{aj}
A_{j}F=2^{-j}\int_{0}^{\infty}\varphi(2^{-j}t)\Delta[t]Fdt
\end{align}
and $\varphi$ is a bump function localized in $\{t\sim 1\}$ such that
$\sum\limits_{j\in\mathbb{Z}}2^{-j}t\varphi(2^{-j}t)=\frac{1}{\ln 2}$.
Hence,
\begin{align*}
T(f)=\sum\limits_{j\in\mathbb{Z}}T_{j}(f),
\end{align*}
where we denote $T_{j}(f)=f\ast A_{j}K_{0}$. Besides, since $\supp K_{0}\subset \{x\in\HH: 1\leq \rho(x)\leq 2\}$, we have for any $q>1$,
\begin{align}\label{Aj}
\|A_{j}K_{0}\|_{1}\leq C\|K_{0}\|_{1}\leq C\|K_{0}\|_{q}.
\end{align}

The next step is to introduce a form of Littlewood-Paley theory, but we avoid any explicit use of the Fourier transform. To this end, let $\phi\in C_{c}^{\infty}(\mathbb{H})$ be a smooth cut-off function such that

(1) $\supp\phi\subset\big\{x\in\HH:\frac{1}{200}\leq\rho(x)\leq\frac{1}{100}\big\}$; (2) $\int_{\HH}\phi(x)dx=1$; (3)$\phi\geq 0$; (4) $\phi=\tilde{\phi}$.
Here $\tilde{F}$ denotes the function $\tilde{F}(x)=F(x^{-1})$. For each integer $j$, write
\begin{align*}
\Psi_{j}=\Delta[2^{j-1}]\phi-\Delta[2^{j}]\phi.
\end{align*}
Then $\Psi_{j}$ is supported on the ball of radius $C2^{j}$, has mean zero, and $\tilde{\Psi}_{j}=\Psi_{j}$.

Next we define the partial sum operators $S_{j}$ by
\begin{align*}
S_{j}(f)=f\ast \Delta[2^{j-1}]\phi.
\end{align*}
Their differences are given by
\begin{align*}
S_{j}(f)-S_{j+1}(f)=f\ast \Psi_{j}.
\end{align*}

Since $S_{j}(f)\rightarrow f$ as $j\rightarrow -\infty$, for any sequence of integer numbers $\{N(j)\}_{j=0}^{\infty}$, with $0=N(0)<N(1)<\cdots<N(j)\rightarrow +\infty$, we have the following identity
\begin{align*}
T_{k}=T_{k}S_{k}+\sum_{j=1}^{\infty}T_{k}(S_{k-N(j)}-S_{k-N(j-1)}).
\end{align*}
In this way, $T=\sum_{j=0}^{\infty}\tilde{T}_{j}=\sum_{j=0}^{\infty}\tilde{T}_{j}^{N}$, where
\begin{align*}
\tilde{T}_{0}:=\tilde{T}_{0}^{N}:=\sum_{k\in\mathbb{Z}}T_{k}S_{k}
\end{align*}
and, for $j\geq 1$,
\begin{align}\label{b1}
\tilde{T}_{j}:=\sum_{k\in\mathbb{Z}}T_{k}(S_{k-j}-S_{k-(j-1)}),
\end{align}
\begin{align}\label{b2}
\tilde{T}_{j}^{N}:=\sum_{k\in\mathbb{Z}}T_{k}(S_{k-N(j)}-S_{k-N(j-1)})=\sum_{i=N(j-1)+1}^{N(j)}\tilde{T}_{i}.
\end{align}
For $j\in\mathbb{Z}$, we also consider the operator $G_{j}$ defined by
\begin{align}\label{b3}
G_{j}=\sum_{k\in\mathbb{Z}}T_{k}(S_{k-j}-S_{k-(j-1)}).
\end{align}
Then we can further decompose $\tilde{T}_{0}$ in the following way.
\begin{align}\label{equa}
\tilde{T}_{0}=\sum_{j=-\infty}^{0}G_{j}.
\end{align}
\subsection{$L^{2}$ estimate for $\tilde{T}_{j}^{N}$}
In this subsection, we will give the $L^{2}$-estimate of the operators $\tilde{T}_{j}^{N}$, which plays a crucial role in the proof of Theorem \ref{main}.
\begin{proposition}\label{ll1}
Let $q>1$ and $\tilde{T}_{j}$ and $\tilde{T}_{j}^{N}$ be the operators as in \eqref{b1} and \eqref{b2}. Then there exist constants $C_{\mathbb{Q},q}>0$ and $\alpha>0$ such that for any $j\geq 0$,
\begin{align}\label{ggg1}
\|\tilde{T}_{j}(f)\|_{2}\leq C_{\mathbb{Q},q}2^{-\alpha j}\|K_{0}\|_{q}\|f\|_{2}
\end{align}
and for any $j\geq 1$,
\begin{align}\label{ggggg1}
\|\tilde{T}_{j}^{N}(f)\|_{2}\leq C_{\mathbb{Q},q}2^{-\alpha N(j-1)}\|K_{0}\|_{q}\|f\|_{2}.
\end{align}
\end{proposition}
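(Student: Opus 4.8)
The plan is to reduce the two estimates to a single dyadic block bound and then assemble it with the Cotlar--Knapp--Stein lemma. From \eqref{b1}, \eqref{b2}, \eqref{b3} and \eqref{equa} we have $\tilde T_j=G_j$ for every $j\ge1$, $\tilde T_0=\sum_{j\le0}G_j$, and $\tilde T_j^N=\sum_{i=N(j-1)+1}^{N(j)}\tilde T_i$ for $j\ge1$; consequently both \eqref{ggg1} and \eqref{ggggg1} follow once we establish, for some $\alpha>0$,
\begin{equation}\label{keyblock}
\|G_jf\|_2\le C_{\mathbb{Q},q}\,2^{-\alpha|j|}\,\|K_0\|_q\,\|f\|_2,\qquad j\in\mathbb{Z}.
\end{equation}
Indeed, for $j\ge1$ this is \eqref{ggg1}; summing the geometric series $\sum_{j\le0}2^{\alpha j}<\infty$ yields $\|\tilde T_0f\|_2\lesssim\|K_0\|_q\|f\|_2$, which is \eqref{ggg1} with $j=0$; and summing $\sum_{i>N(j-1)}2^{-\alpha i}\lesssim2^{-\alpha N(j-1)}$ yields \eqref{ggggg1}.

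To prove \eqref{keyblock} I would write $G_j=\sum_{k\in\mathbb{Z}}U_k^{(j)}$ with $U_k^{(j)}f=(f*\Psi_{k-j})*A_kK_0$, which, by associativity of convolution on $\mathbb{H}$, is convolution with $\Psi_{k-j}*A_kK_0$. There are two ingredients. First, a single--block estimate $\|U_k^{(j)}\|_{2\to2}\le C_{\mathbb{Q},q}\,2^{-\alpha|j|}\|K_0\|_q$, uniform in $k$ by the dilation invariance of the construction. Second, an almost--orthogonality estimate with geometric decay in $|k-l|$: using $\widetilde\Psi_m=\Psi_m$ one checks that $U_k^{(j)}(U_l^{(j)})^*$ is convolution with $\widetilde{A_lK_0}*\Psi_{l-j}*\Psi_{k-j}*A_kK_0$, whence, factoring the composition from the outside in and using \eqref{Aj},
$$\|U_k^{(j)}(U_l^{(j)})^*\|_{2\to2}\le\|A_kK_0\|_1\,\|\widetilde{A_lK_0}\|_1\,\|\Psi_{l-j}*\Psi_{k-j}\|_{2\to2}\lesssim\|K_0\|_q^2\,2^{-\delta|k-l|},$$
the last factor being the standard Littlewood--Paley almost--orthogonality of the pieces $\Psi_m$ (which follows from their mean zero and smoothness, with no Fourier transform needed); the product $\|(U_k^{(j)})^*U_l^{(j)}\|_{2\to2}$ is treated symmetrically. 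Taking a geometric mean of the bound $\|U_k^{(j)}(U_l^{(j)})^*\|\le 2^{-2\alpha|j|}\|K_0\|_q^2$ coming from the single--block estimate and the bound above, one gets pairwise estimates $\|U_k^{(j)}(U_l^{(j)})^*\|^{1/2}\lesssim2^{-\alpha'|j|}2^{-\varepsilon|k-l|}\|K_0\|_q$ with $\alpha',\varepsilon>0$; since $\sum_m2^{-\varepsilon|m|}<\infty$, Cotlar--Knapp--Stein gives $\|G_j\|_{2\to2}\lesssim2^{-\alpha'|j|}\|K_0\|_q$, which is \eqref{keyblock} after relabelling $\alpha'$.

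What remains, and is the heart of the matter, is the single--block estimate; this is the key $L^2$ inequality originating in Tao \cite{Tao} and extended to this generality by Sato \cite{sato}. By dilation invariance it suffices to take $k=0$, so the task is $\|(f*\Psi_{-j})*A_0K_0\|_2\lesssim2^{-\alpha|j|}\|K_0\|_q\|f\|_2$. When $j\le0$ the cut--off $\Psi_{-j}$ is a $C_c^\infty$ function varying at scale $2^{|j|}\ge1$, much coarser than the support scale of $A_0K_0$: writing $\Psi_{-j}*A_0K_0(x)=\int[\Psi_{-j}(xz^{-1})-\Psi_{-j}(x)]\,A_0K_0(z)\,dz$ via $\int A_0K_0=0$ and estimating the bracket by the left--invariant derivatives of $\Psi_{-j}$ (each of which gains a positive power of $2^{-|j|}$), one gets $\|\Psi_{-j}*A_0K_0\|_1\lesssim2^{-\alpha|j|}\|A_0K_0\|_1\lesssim2^{-\alpha|j|}\|K_0\|_q$ by \eqref{Aj}, and then Young's inequality. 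The case $j>0$ is the main obstacle: now $\Psi_{-j}$ is \emph{finer} than $A_0K_0$, no elementary scale cancellation is available, and since there is no usable Fourier transform the decay cannot be read off a multiplier. Here one runs the iterated $TT^*$ argument of \cite{Tao}, extracting the decay $2^{-\alpha j}$ from the $L^q$--integrability of $K_0$ ($q>1$ suffices, the constant depending on $q$) together with its mean zero; because $\mathbb{H}$ is non-commutative one must track the order of the convolution factors at every stage of the iteration, and it is precisely to make this go through that the decomposition \eqref{b2} is used in place of the one in \cite{HRT}. Once this is in hand, the almost--orthogonality step, the geometric mean, Cotlar--Knapp--Stein, and the summations in the reduction are all routine.
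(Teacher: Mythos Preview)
Your reduction is exactly the paper's: show \eqref{keyblock} for $G_j$ via Cotlar--Knapp--Stein, then sum. Your single-block input $\|U_k^{(j)}\|_{2\to2}\lesssim 2^{-\alpha|j|}\|K_0\|_q$ is precisely the dual form of the key inequality \eqref{kkkey} from Tao/Sato, and your treatment of $U_k^{(j)}(U_l^{(j)})^*$ is correct, since in that product the kernel is $A_l\tilde K_0*\Psi_{l-j}*\Psi_{k-j}*A_kK_0$ with the two $\Psi$'s adjacent, so the Littlewood--Paley almost-orthogonality $\|\Psi_{l-j}*\Psi_{k-j}\|_1\lesssim 2^{-|k-l|}$ applies directly.

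The gap is in your claim that $(U_k^{(j)})^*U_l^{(j)}$ is ``treated symmetrically.'' It is not symmetric: this product has kernel $\Psi_{l-j}*A_lK_0*A_k\tilde K_0*\Psi_{k-j}$, so the two $\Psi$'s are separated by the rough block $A_lK_0*A_k\tilde K_0$, and your factorization only yields $\|A_lK_0\|_1\|A_k\tilde K_0\|_1\lesssim\|K_0\|_q^2$ with no decay in $|k-l|$. Since $K_0$ is merely $L^q$, there is no smoothness to exploit and $\|A_lK_0*A_k\tilde K_0\|_1$ carries no $|k-l|$ gain. This is exactly the term the paper treats explicitly (and calls the harder one): it inserts a Littlewood--Paley resolution $\sum_{\ell}\sum_{\ell'}\Psi_\ell*\Psi_{\ell'}$ between $A_{k'}K_0$ and $A_k\tilde K_0$, then applies \eqref{kkkey} twice to obtain one bound with factors $2^{-\beta|k-\ell'|}2^{-\beta|k'-\ell|}$ and a second bound using \eqref{kkkey} on the outer pairs together with $\|\Psi_\ell*\Psi_{\ell'}\|_1\lesssim 2^{-|\ell-\ell'|}$ to obtain $2^{-2\beta|j|}2^{-|\ell-\ell'|}$; the geometric mean and triangle inequality then produce decay simultaneously in $|j|$ and $|k-k'|$, which is what Cotlar--Knapp--Stein needs. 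Without this insertion step your almost-orthogonality for the $T^*T$ term is incomplete.
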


Now, we show the unweighted $L^{2}$ estimate for $G_{j}$ and then $\tilde{T}_{j}^{N}$.
\begin{lemma}\label{ll2}
Let $q>1$ and $G_{j}$ be the operator as in \eqref{b3}. Then there exist constants $C_{\mathbb{Q},q}>0$ and $\alpha>0$ such that for any $j\in\mathbb{Z}$,
\begin{align*}
\|G_{j}(f)\|_{2}\leq C_{\mathbb{Q},q}2^{-\alpha |j|}\|K_{0}\|_{q}\|f\|_{2}.
\end{align*}
\end{lemma}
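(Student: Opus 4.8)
The plan is to write $G_{j}$ as a sum over scales of single-scale operators and to run the Cotlar--Knapp--Stein lemma in the scale parameter; the only non-routine ingredient is the single-scale $L^{2}$ decay estimate of Tao \cite{Tao}, extended to $L^{q}$ kernels by Sato \cite{sato}. First I would record the single-scale decomposition. Setting $P_{m}f=f\ast\Psi_{m}=S_{m}f-S_{m+1}f$, so that $P_{m}^{*}=P_{m}$ (as $\tilde\Psi_{m}=\Psi_{m}$ is real), associativity of the group convolution gives $G_{j}f=\sum_{k\in\mathbb Z}\mathcal U_{k}f$ with $\mathcal U_{k}:=T_{k}P_{k-j}$, i.e.\ $\mathcal U_{k}f=f\ast L_{k}$, $L_{k}:=\Psi_{k-j}\ast A_{k}K_{0}$. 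Since $\supp K_{0}\subset\{1\le\rho\le2\}$ and $A_{k}K_{0}=\Delta[2^{k}]A_{0}K_{0}$, the kernel $L_{k}$ is supported in a ball of radius $\lesssim2^{\max\{k,\,k-j\}}$; because $K_{0}$ has mean zero, $\int A_{k}K_{0}=\int A_{0}K_{0}=0$, so $\int L_{k}=0$; and $\|\mathcal U_{k}\|_{2\to2}\le\|L_{k}\|_{1}\le2\|A_{k}K_{0}\|_{1}\lesssim\|K_{0}\|_{q}$ by Young's inequality and \eqref{Aj}.

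Next I would assemble two decay inputs. (a) A Littlewood--Paley bound: since $\Psi_{m}=\Delta[2^{m}]\Psi_{0}$ with $\Psi_{0}\in C_{c}^{\infty}$ of mean zero, expanding $\Psi_{m}\ast\Psi_{m'}$ around the smaller of the two scales gives $\|\Psi_{m}\ast\Psi_{m'}\|_{1}\lesssim2^{-|m-m'|}$, hence $\|P_{m}P_{m'}\|_{2\to2}\lesssim2^{-|m-m'|}$. (b) The key single-scale estimate $\|\mathcal U_{k}\|_{2\to2}\lesssim2^{-\alpha|j|}\|K_{0}\|_{q}$ for some $\alpha>0$. By dilation invariance of the $L^{2}\to L^{2}$ operator norm this reduces to $k=0$, i.e.\ to a bound for right-convolution with $\Psi_{-j}\ast A_{0}K_{0}$. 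For $j\le0$ it is elementary: using $\int A_{0}K_{0}=0$ and the Lipschitz bound for $\Psi_{0}$ at the large scale $2^{|j|}$ one gets $\|\Psi_{-j}\ast A_{0}K_{0}\|_{1}\lesssim2^{-|j|}\|K_{0}\|_{q}$. For $j\ge0$ it is precisely the key $L^{2}$ estimate of \cite{Tao,sato}, proved by an iterated $TT^{*}$ argument: sufficiently many self-convolutions of the merely-$L^{q}$ kernel $A_{0}K_{0}$, combined with the cancellation carried by the $\Psi$'s and the mean-zero of $K_{0}$, yield a kernel regular enough to run an oscillation estimate. The same circle of ideas produces the cross-scale orthogonality $\|T_{k}^{*}T_{k'}\|_{2\to2}\lesssim2^{-\varepsilon|k-k'|}\|K_{0}\|_{q}^{2}$ for some $\varepsilon>0$ --- essentially the estimate that makes $T=\sum_{k}T_{k}$ bounded on $L^{2}$ in \cite{Tao,sato}.

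Finally I would run the Cotlar--Knapp--Stein lemma on $\{\mathcal U_{k}\}_{k\in\mathbb Z}$. Since $\mathcal U_{k}\mathcal U_{k'}^{*}=T_{k}P_{k-j}P_{k'-j}T_{k'}^{*}$, input (a) together with $\|T_{k}\|_{2\to2}\lesssim\|K_{0}\|_{q}$ gives $\|\mathcal U_{k}\mathcal U_{k'}^{*}\|_{2\to2}\lesssim2^{-|k-k'|}\|K_{0}\|_{q}^{2}$, while input (b) gives $\lesssim2^{-2\alpha|j|}\|K_{0}\|_{q}^{2}$; the geometric mean is $\lesssim2^{-|k-k'|/2}2^{-\alpha|j|}\|K_{0}\|_{q}^{2}$. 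Similarly $\mathcal U_{k}^{*}\mathcal U_{k'}=P_{k-j}T_{k}^{*}T_{k'}P_{k'-j}$, and the cross-scale bound combined with input (b) gives $\|\mathcal U_{k}^{*}\mathcal U_{k'}\|_{2\to2}\lesssim2^{-\varepsilon|k-k'|/2}2^{-\alpha|j|}\|K_{0}\|_{q}^{2}$. Feeding these into the lemma,
\begin{align*}
\|G_{j}\|_{2\to2}=\Big\|\sum_{k\in\mathbb Z}\mathcal U_{k}\Big\|_{2\to2}\lesssim\sum_{l\in\mathbb Z}2^{-c|l|}\,2^{-(\alpha/2)|j|}\,\|K_{0}\|_{q}\lesssim2^{-(\alpha/2)|j|}\|K_{0}\|_{q},
\end{align*}
which is Lemma \ref{ll2} after renaming $\alpha/2$ as $\alpha$, with $C_{\mathbb Q,q}$ absorbing the implied constants.

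The hard part is input (b). On a general homogeneous group there is no Fourier transform, and $A_{0}K_{0}$ --- though smooth in the radial (dilation) direction --- is merely $L^{q}$ and hence not smooth transversally, so neither the single-scale decay nor the cross-scale orthogonality follows from a direct oscillation estimate; Tao's iterated $TT^{*}$ device is what circumvents this, and it is there that the mean-zero and $L^{q}$ hypotheses on $K_{0}$ (and the smoothness and cancellation built into $\phi$ and the $\Psi_{m}$) are used. Everything else is standard Littlewood--Paley/Cotlar bookkeeping, which is also where the paper keeps track of the dependence on $\|K_{0}\|_{q}$.
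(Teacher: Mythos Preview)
Your proof is correct and follows the same strategy as the paper: Cotlar--Knapp--Stein on the single-scale pieces $G_{j,k}=T_{k}P_{k-j}$, with the Tao--Sato estimate \eqref{kkkey} (and its dual) supplying both the decay in $|j|$ and the almost-orthogonality in $k$. The one difference is in how you handle $\mathcal U_{k}^{*}\mathcal U_{k'}=P_{k-j}T_{k}^{*}T_{k'}P_{k'-j}$: you invoke a cross-scale bound $\|T_{k}^{*}T_{k'}\|_{2\to2}\lesssim2^{-\varepsilon|k-k'|}\|K_{0}\|_{q}^{2}$ as a black box from \cite{Tao,sato}, whereas the paper instead inserts a resolution $\sum_{\ell,\ell'}\Psi_{\ell}\ast\Psi_{\ell'}$ between the two rough kernels $A_{k'}K_{0}$ and $A_{k}\tilde K_{0}$ and applies \eqref{kkkey} (and its dual) to each block, obtaining decay in $|k-\ell'|$, $|k'-\ell|$, $|\ell-\ell'|$, and $|j|$ simultaneously, then sums over $\ell,\ell'$. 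Your black-box estimate is not stated verbatim in \cite{Tao,sato}, and proving it from \eqref{kkkey} requires precisely this insertion-of-resolution trick; so once unpacked the two arguments coincide, but the paper's version is self-contained while yours defers that step to the literature.
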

\begin{proof}
For simplicity, we set
$$G_{j,k}(f):=T_{k}(S_{k-j}-S_{k-(j-1)})(f),$$
then $G_{j}(f)=\sum_{k\in\mathbb{Z}}G_{j,k}(f)$. By Cotlar-Knapp-Stein Lemma (see \cite{s93}), it suffices to show that:
\begin{align}\label{CKS}
\|G_{j,k}^{*}G_{j,k^{\prime}}\|_{2\rightarrow 2}+\|G_{j,k^{\prime}}G_{j,k}^{*}\|_{2\rightarrow 2}\leq C2^{-2\alpha |j|}2^{-c|k-k^{\prime}|}\|K_{0}\|_{q}^{2},
\end{align}
for some $C,c>0$ and $\alpha>0$. We only estimate the first term, since the second term is similar and simpler. A direct calculation yields
\begin{align}\label{GDE}
G_{j,k}^{*}G_{j,k^{\prime}}(f)
&=f\ast \Psi_{k^{\prime}-j}\ast A_{k^{\prime}}K_{0}\ast A_{k}\tilde{K}_{0}\ast \Psi_{k-j}\nonumber\\
&=\sum_{\ell=1}^{\infty}\sum_{\ell^{\prime}=1}^{\infty}f\ast \Psi_{k^{\prime}-j}\ast A_{k^{\prime}}K_{0}\ast\Psi_{\ell}\ast\Psi_{\ell^{\prime}}\ast A_{k}\tilde{K}_{0}\ast \Psi_{k-j}.
\end{align}

To continue, we recall that Tao \cite{Tao} applied iterated $(TT^{*})^{N}$ method to obtain the following inequality with $q=\infty$ and then Sato \cite{sato} extended it to general $q>1$:
there exist constants $C_{\mathbb{Q},q}>0$ and $\alpha>0$ such that for any integers $j$, $k$, and any $L^{q}$ function $F$ on the annulus with mean zero,
\begin{align}\label{kkkey}
\|f\ast A_{j}F\ast \Psi_{k}\|_{2}\leq C_{\mathbb{Q},q}2^{-\alpha|j-k|}\|f\|_{2}\|F\|_{q}.
\end{align}

On the one hand, it follows from the above fact and its duality version that
\begin{align}\label{f1}
\|(f\ast \Psi_{k^{\prime}-j})\ast (A_{k^{\prime}}K_{0}\ast\Psi_{\ell})\ast(\Psi_{\ell^{\prime}}\ast A_{k}\tilde{K}_{0})\ast \Psi_{k-j}\|_{2}
&\leq C\|(f\ast \Psi_{k^{\prime}-j})\ast (A_{k^{\prime}}K_{0}\ast\Psi_{\ell})\ast(\Psi_{\ell^{\prime}}\ast A_{k}\tilde{K}_{0})\|_{2}\nonumber\\
&\leq C2^{-\beta|k-\ell^{\prime}|}\|K_{0}\|_{q}\|(f\ast \Psi_{k^{\prime}-j})\ast (A_{k^{\prime}}K_{0}\ast\Psi_{\ell})\|_{2}\nonumber\\
&\leq C2^{-\beta|k-\ell^{\prime}|}2^{-\beta|k^{\prime}-\ell|}\|K_{0}\|_{q}^{2}\|f\ast \Psi_{k^{\prime}-j}\|_{2}\nonumber\\
&\leq C2^{-\beta|k-\ell^{\prime}|}2^{-\beta|k^{\prime}-\ell|}\|K_{0}\|_{q}^{2}\|f\|_{2}
\end{align}
for some $C>0$ and $\beta>0$.

On the other hand, by the smoothness and the cancellation properties of $\Psi_{\ell}$ and $\Psi_{\ell^{\prime}}$, we see that $\|\Psi_{\ell}\ast\Psi_{\ell^{\prime}}\|_{1}\leq C2^{-|\ell-\ell^{\prime}|}$. Therefore,
\begin{align}\label{vbnm}
\|(f\ast \Psi_{k^{\prime}-j}\ast A_{k^{\prime}}K_{0})\ast(\Psi_{\ell}\ast\Psi_{\ell^{\prime}})\ast (A_{k}\tilde{K}_{0}\ast \Psi_{k-j})\|_{2}
&\leq C2^{-\beta|j|}\|K_{0}\|_{q}\|(f\ast \Psi_{k^{\prime}-j}\ast A_{k^{\prime}}K_{0})\ast(\Psi_{\ell}\ast\Psi_{\ell^{\prime}})\|_{2}\nonumber\\
&\leq C2^{-\beta|j|}2^{-|\ell-\ell^{\prime}|}\|K_{0}\|_{q}\|f\ast \Psi_{k^{\prime}-j}\ast A_{k^{\prime}}K_{0}\|_{2}\nonumber\\
&\leq C2^{-2\beta|j|}2^{-|\ell-\ell^{\prime}|}\|K_{0}\|_{q}^{2}\|f\|_{2}.
\end{align}

Taking geometric means of \eqref{f1} and \eqref{vbnm} and then applying the triangle inequality, we see that
\begin{align*}
\|(f\ast \Psi_{k^{\prime}-j}\ast A_{k^{\prime}}K_{0})\ast(\Psi_{\ell}\ast\Psi_{\ell^{\prime}})\ast (A_{k}\tilde{K}_{0}\ast \Psi_{k-j})\|_{2}\lesssim C2^{-c|k-\ell^{\prime}|}2^{-c|k^{\prime}-\ell|}2^{-c|j|}2^{-c|k-k^{\prime}|}\|K_{0}\|_{q}^{2}\|f\|_{2},
\end{align*}
for some constant $c>0$.
This, in combination with the equality \eqref{GDE}, implies the estimate \eqref{CKS}. This finishes the proof of Lemma \ref{ll2}.
\end{proof}

Next, we give the proof of Proposition \ref{ll1}.

{\it Proof of Proposition \ref{ll1}}.
It suffices to show the first inequality, since the second one can be obtained by simply summing the geometric series $\sum_{i=N(j-1)+1}^{N(j)}2^{-\alpha i}$. It follows from the fact $G_{j}=\tilde{T}_{j}$ for $j\geq 1$ and Lemma \ref{ll2} that the estimate \eqref{ggg1} holds for $j\geq 1$. For the case $j=0$, By applying Lemma \ref{ll2} to the equality \eqref{equa}, we obtain
\begin{align*}
\|\tilde{T}_{0}(f)\|_{2}\leq \sum_{j=-\infty}^{0}\|G_{j}(f)\|_{2}\leq C\sum_{j=-\infty}^{0}2^{\alpha j}\|K_{0}\|_{q}\|f\|_{2}\leq C\|K_{0}\|_{q}\|f\|_{2}.
\end{align*}
Hence, the proof of Proposition \ref{ll1} is finished.
\hfill $\square$

\subsection{Calder\'{o}n--Zygmund theory of $\tilde{T}_{j}^{N}$}
\begin{lemma}\label{cal}
The operator $\tilde{T}_{j}^{N}$ is a Calder\'{o}n--Zygmund operator satisfying for any $q>1$,
\begin{align*}
C_{j}^{N}:=C_{\tilde{T}_{j}^{N}}\leq C_{\mathbb{Q},q}2^{\frac{N(j)\mathbb{Q}}{q}}\|K_{0}\|_{q},\ \ \omega_{j}^{N}(t):=\omega_{\tilde{T}_{j}^{N}}(t)\leq C_{\mathbb{Q},q}2^{\frac{N(j)\mathbb{Q}}{q}}\min\{1,2^{N(j)}t\}\|K_{0}\|_{q},
\end{align*}
which satisfies
\begin{align*}
\int_{0}^{1}\omega_{j}^{N}(t)\frac{dt}{t}\leq C_{\mathbb{Q},q}2^{\frac{N(j)\mathbb{Q}}{q}}(1+N(j))\|K_{0}\|_{q}.
\end{align*}
\end{lemma}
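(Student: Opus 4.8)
The plan is to verify the size condition, the smoothness condition, and the Dini estimate for $\tilde{T}_j^N$ by working directly with its convolution kernel. Writing $\tilde T_j^N f = f\ast \Phi_j^N$ where
\begin{align*}
\Phi_j^N = \sum_{k\in\ZZ}(\Psi_{k-N(j)}-\Psi_{\text{appropriate}})\ast A_k K_0,
\end{align*}
more precisely $\tilde T_j^N f = \sum_k T_k(S_{k-N(j)}-S_{k-N(j-1)})f = \sum_k f\ast(\Delta[2^{k-N(j)-1}]\phi - \Delta[2^{k-N(j-1)-1}]\phi)\ast A_k K_0$, the kernel is a sum over $k$ of terms each supported (in the second convolution slot) on $\rho \sim 2^k$ with the mollifier at scale $2^{k-N(j)}$ to $2^{k-N(j-1)}$. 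So for fixed $x$ only finitely many (boundedly many, uniformly) values of $k$ with $2^k\sim\rho(x)$ contribute. First I would estimate $\|A_k K_0\|_\infty$, or rather the relevant $L^q\to L^\infty$-type bound: since $A_k K_0$ is an $L^1$-average of dilates of $K_0$ and $K_0\in L^q$, one gets by Hölder on the annulus that the convolution $\Delta[\cdot]\phi \ast A_k K_0$ at scale $2^k$ is pointwise $\lesssim 2^{-k\mathbb Q}2^{N(j)\mathbb Q/q}\|K_0\|_q$, because the mollifier lives on a ball of radius $\sim 2^{k-N(j)}$ and applying Hölder there produces the loss $(2^{-N(j)})^{-\mathbb Q/q}\cdot(\text{volume factor})$. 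Summing the $O(1)$ relevant $k$ gives the size bound $C_j^N\lesssim 2^{N(j)\mathbb Q/q}\|K_0\|_q$.

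For the smoothness condition, I would exploit that $\phi$ (hence $\Delta[2^m]\phi$) is smooth with derivative bounds: $|\Delta[2^m]\phi(x)-\Delta[2^m]\phi(x')| \lesssim 2^{-m\mathbb Q}\cdot 2^{-m}d(x,x')$ on the relevant scale (using left-invariance and the mean value inequality on homogeneous groups for the quasi-norm, as in \cite{FoSt}), valid when $d(x,x')\lesssim 2^m$. Convolving against $A_k K_0$ and taking the term at scale $2^k$ with the coarsest mollifier at scale $2^{k-N(j-1)}$, the smoothness gain is $2^{N(j)}d(x,x')/2^k$; combined with the size bound and summing over the $O(1)$ contributing $k$, this yields
\begin{align*}
\omega_j^N(t) \lesssim 2^{\frac{N(j)\mathbb Q}{q}}\min\{1, 2^{N(j)}t\}\|K_0\|_q,
\end{align*}
the $\min$ coming from: the trivial bound (twice the size bound) always available, versus the smoothness gain which is only useful until $2^{N(j)}t\sim 1$. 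One must also check the second-variable (adjoint) smoothness, which is symmetric because $\tilde\phi=\phi$ and $\tilde\Psi_j=\Psi_j$, so the kernel of the adjoint has the same structure with $K_0$ replaced by $\tilde K_0$, and $\|\tilde K_0\|_q=\|K_0\|_q$.

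The Dini integral is then immediate: split $\int_0^1 = \int_0^{2^{-N(j)}} + \int_{2^{-N(j)}}^1$; on the first piece use $\omega_j^N(t)\lesssim 2^{N(j)\mathbb Q/q}2^{N(j)}t\|K_0\|_q$ so the integral is $\lesssim 2^{N(j)\mathbb Q/q}\|K_0\|_q$; on the second piece use $\omega_j^N(t)\lesssim 2^{N(j)\mathbb Q/q}\|K_0\|_q$ and $\int_{2^{-N(j)}}^1 dt/t = N(j)\ln 2$, giving $\lesssim 2^{N(j)\mathbb Q/q}N(j)\|K_0\|_q$; together $\lesssim 2^{N(j)\mathbb Q/q}(1+N(j))\|K_0\|_q$. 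The main obstacle I anticipate is making the pointwise bound on $\Delta[\cdot]\phi\ast A_k K_0$ and its Hölder loss $2^{N(j)\mathbb Q/q}$ fully rigorous — one has to be careful that $A_k K_0$ is supported on an annulus of outer radius $\sim 2^k$ (so that the convolution against a bump at scale $2^{k-N(j)}$ is genuinely an average over a set of measure $\sim 2^{(k-N(j))\mathbb Q}$ inside that annulus), and to track the noncommutativity of the group (the convolution $f\ast g$ integrates $f(y)g(y^{-1}x)$, so translating the $x$-variable is handled by the second factor, which is where the smoothness of $\phi$ is applied, and the argument there should go through using only that $d$ is a left-invariant quasi-distance and that dilations interact well with $d$). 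Once these pointwise kernel estimates are in place, the rest is the routine splitting above.
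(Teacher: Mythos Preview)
Your proposal is correct and follows essentially the same route as the paper's proof: estimate $\Delta[2^{k-N(j)-1}]\phi\ast A_kK_0$ pointwise via H\"older on the annulus to produce the loss $2^{N(j)\mathbb{Q}/q}$, obtain a gradient bound on the mollified kernel to get the Lipschitz gain $2^{N(j)}t$, combine the two into $\omega_j^N(t)\lesssim 2^{N(j)\mathbb{Q}/q}\min\{1,2^{N(j)}t\}\|K_0\|_q$, and then split the Dini integral at $t=2^{-N(j)}$. One small slip in your wording: the worst smoothness gain $2^{N(j)}t$ comes from the \emph{finer} mollifier at scale $2^{k-N(j)}$, not from the coarser one at scale $2^{k-N(j-1)}$; your stated bound is nevertheless the correct one.
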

\begin{proof}
From Proposition \ref{ll1} we can see that $\tilde{T}_{j}^{N}$ is a bounded operator in $L^{2}$. In order to obtain the required estimates for the kernel of $\tilde{T}_{j}^{N}$, we first study the kernel of each $T_{k}S_{k-N(j)}$. Note that
\begin{align*}
|A_{k}K_{0}(x)|
&=2^{-k}\left|\int_{-\infty}^{+\infty}\varphi(2^{-k}t)t^{-\mathbb{Q}}K(t^{-1}\circ x)\chi_{1\leq \rho(t^{-1}\circ x)\leq 2}(x)dt\right|\\
&=\left|\int_{-\infty}^{+\infty}t\varphi(t)(2^{k}t)^{-\mathbb{Q}}K((2^{k}t)^{-1}\circ x)\chi_{2^{k}t\leq\rho(x)\leq 2^{k+1}t}(x)dt\right|\\
&\leq C\rho(x)^{-\mathbb{Q}}|K(\rho(x)^{-1}\circ x)|\chi_{2^{k-1}\leq\rho(x)\leq 2^{k+2}}(x).
\end{align*}
Since $\supp\phi\subset\{x\in\HH:\rho(x)\leq \frac{1}{100}\}$,
\begin{align*}
&|\Delta[2^{k-N(j)-1}]\phi\ast A_{k}K_{0}(x)|\\
&\leq C\int_{\HH}2^{-[k-N(j)-1]\mathbb{Q}}|\phi(2^{-[k-N(j)-1]}\circ xy^{-1})|\rho(y)^{-\mathbb{Q}}\chi_{2^{k-1}\leq\rho(y)\leq 2^{k+2}}(y)|K(\rho(y)^{-1}\circ y)|dy\\
&\leq C2^{-[k-N(j)-1]\mathbb{Q}}\rho(x)^{-\mathbb{Q}}\chi_{2^{k-2}\leq \rho(x)\leq 2^{k+3}}(x)\int_{2^{k-1}\leq \rho(y)\leq 2^{k+2}}|\phi(2^{-[k-N(j)-1]}\circ xy^{-1})||K(\rho(y)^{-1}\circ y)|dy.
\end{align*}
By H\"{o}lder's inequality,
\begin{align*}
&\int_{2^{k-1}\leq \rho(y)\leq 2^{k+2}}|\phi(2^{-[k-N(j)-1]}\circ xy^{-1})||K(\rho(y)^{-1}\circ y)|dy\\
&\leq \left(\int_{2^{k-1}\leq \rho(y)\leq 2^{k+2}}|K(\rho(y)^{-1}\circ y)|^{q}dy\right)^{1/q}\left(\int_{\HH}|\phi(2^{-[k-N(j)-1]}\circ xy^{-1})|^{q^{\prime}}dy\right)^{1/q^{\prime}}\\
&\leq C2^{\frac{k\mathbb{Q}}{q}}2^{\frac{(k-N(j)-1)\mathbb{Q}}{q^{\prime}}}\|K_{0}\|_{q}.
\end{align*}
Therefore,
\begin{align}\label{p1}
\sum_{k\in\mathbb{Z}}|\Delta[2^{k-N(j)-1}]\phi\ast A_{k}K_{0}(x)|
\leq C2^{\frac{N(j)\mathbb{Q}}{q}}\rho(x)^{-\mathbb{Q}}\|K_{0}\|_{q}.
\end{align}
Similarly, we obtain the  gradient estimate as follows.
\begin{align*}
&|\nabla\Delta[2^{k-N(j)-1}]\phi\ast A_{k}K_{0}(x)|\leq C2^{N(j)\big(1+\frac{\mathbb{Q}}{q}\big)}\rho(x)^{-\mathbb{Q}-1}\chi_{2^{k-2}\leq \rho(x)\leq 2^{k+3}}(x)\|K_{0}\|_{q}.
\end{align*}
Hence,
\begin{align}\label{p2}
\sum_{k\in\mathbb{Z}}|\nabla\Delta[2^{k-N(j)-1}]\phi\ast A_{k}K_{0}(x)|\leq C2^{N(j)\big(1+\frac{\mathbb{Q}}{q}\big)}\rho(x)^{-\mathbb{Q}-1}\|K_{0}\|_{q}.
\end{align}
From the triangle inequality and $N(j-1)<N(j)$ we can see that the kernel $K_{j}^{N}:=\sum_{k\in\mathbb{Z}}(\Delta[2^{k-N(j)-1}]\phi-\Delta[2^{k-N(j-1)-1}]\phi)\ast A_{k}K_{0}$ of $\tilde{T}_{j}^{N}$ satisfies the same estimates \eqref{p1} and \eqref{p2}. That is,
\begin{align*}
|K_{j}^{N}(x,y)|=|K_{j}^{N}(y^{-1}x)|\leq C2^{\frac{N(j)\mathbb{Q}}{q}}d(x,y)^{-\mathbb{Q}}\|K_{0}\|_{q},\\
|\nabla K_{j}^{N}(x,y)|\leq C2^{N(j)\big(1+\frac{\mathbb{Q}}{q}\big)}d(x,y)^{-\mathbb{Q}-1}\|K_{0}\|_{q}.
\end{align*}
(For $j=0$, the subtraction is not even needed.) The first bound above is already the required estimate for $C_{j}^{N}$. Besides, for $d(x,y)\geq 2A_{0}d(x,x^{\prime})$, by mean value theorem on homogeneous groups (see for example \cite{FoSt}),
\begin{align*}
|K_{j}^{N}(x,y)-K_{j}^{N}(x^{\prime},y)|
&=|K_{j}^{N}(y^{-1}x)-K_{j}^{N}(y^{-1}x^{\prime})|\\
&\leq C2^{N(j)\big(1+\frac{\mathbb{Q}}{q}\big)}d(x,y)^{-\mathbb{Q}-1}d(x,x^{\prime})\|K_{0}\|_{q}.
\end{align*}
By the triangle inequality, we also have
\begin{align*}
|K_{j}^{N}(x,y)-K_{j}^{N}(x^{\prime},y)|\leq C2^{\frac{N(j)\mathbb{Q}}{q}}d(x,y)^{-\mathbb{Q}}\|K_{0}\|_{q}.
\end{align*}
Combining the two estimates we obtain above and by symmetry, we conclude that
\begin{align*}
|K_{j}^{N}(x,y)-K_{j}^{N}(x^{\prime},y)|+|K_{j}^{N}(y,x)-K_{j}^{N}(y,x^{\prime})|\leq C\omega_{j}^{N}\left(\frac{d(x,x^{\prime})}{d(x,y)}\right)d(x,y)^{-\mathbb{Q}},
\end{align*}
where
\begin{align*}
\omega_{j}^{N}(t)\leq C2^{\frac{N(j)\mathbb{Q}}{q}}\min\{1,2^{N(j)}t\}\|K_{0}\|_{q}.
\end{align*}
A direct calculation yields that
\begin{align*}
\int_{0}^{1}\omega_{j}^{N}(t)\frac{dt}{t}\leq C2^{\frac{N(j)\mathbb{Q}}{q}}(1+N(j))\|K_{0}\|_{q}.
\end{align*}
This ends the proof of Lemma \ref{cal}.
\end{proof}
With the help of  Proposition \ref{ll1}, Lemmata \ref{domin} and \ref{cal}, we can easily follow a similar procedure in \cite{HRT} to show a bad quantitative $L^{p}$ weighted inequality and a good quantitative unweighted $L^{p}$ estimate for the operators $\tilde{T}_{j}^{N}$.
\begin{lemma}\label{wei}
Let $1<p<\infty$ and $q>1$, then for any $w\in A_{p}$, there exists a constant $C_{\mathbb{Q},p,q}>0$ such that
\begin{align*}
\|\tilde{T}_{j}^{N}(f)\|_{L^{p}(w)}\leq C_{\mathbb{Q},p,q}2^{\frac{N(j)\mathbb{Q}}{q}}(1+N(j))\|K_{0}\|_{q}\{w\}_{A_{p}}\|f\|_{L^{p}(w)}.
\end{align*}
\end{lemma}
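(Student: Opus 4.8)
The plan is to deduce Lemma~\ref{wei} from Lemma~\ref{cal} and Proposition~\ref{ll1} by combining the pointwise sparse domination of Dini--Calder\'on--Zygmund operators with the sharp $A_{p}$--$A_{\infty}$ weighted bound for sparse operators; this is the mechanism behind the corresponding step in \cite{HRT}. Note that the cruder estimate of Lemma~\ref{domin} would only produce the factor $[w]_{A_{p}}^{\max\{1,1/(p-1)\}}$, whereas the sharper quantity $\{w\}_{A_{p}}$ is exactly what the sparse bound delivers.

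First I would collect, for a fixed $j$, the three quantitative inputs already available. By Proposition~\ref{ll1}, since $N(j-1)\geq 0$ we have $\|\tilde{T}_{j}^{N}\|_{L^{2}\to L^{2}}\leq C_{\mathbb{Q},q}\|K_{0}\|_{q}$; by Lemma~\ref{cal}, $C_{j}^{N}\leq C_{\mathbb{Q},q}2^{N(j)\mathbb{Q}/q}\|K_{0}\|_{q}$ and $\|\omega_{j}^{N}\|_{{\rm Dini}}\leq C_{\mathbb{Q},q}2^{N(j)\mathbb{Q}/q}(1+N(j))\|K_{0}\|_{q}$. Since $2^{N(j)\mathbb{Q}/q}(1+N(j))\geq 1$, summing these gives the single scalar bound
\begin{align*}
\|\tilde{T}_{j}^{N}\|_{L^{2}\to L^{2}}+C_{j}^{N}+\|\omega_{j}^{N}\|_{{\rm Dini}}\leq C_{\mathbb{Q},q}\,2^{\frac{N(j)\mathbb{Q}}{q}}(1+N(j))\|K_{0}\|_{q}.
\end{align*}

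Next, $\tilde{T}_{j}^{N}$ is an $L^{2}$-bounded $\omega_{j}^{N}$-Calder\'on--Zygmund operator with $\omega_{j}^{N}$ satisfying the Dini condition (Lemma~\ref{cal}), so I would invoke Lerner's sparse domination theorem in the form valid on spaces of homogeneous type (the machinery underlying \cite[Theorem 6.1]{Lorist}): for each $f$ there is a sparse family $\mathcal{S}$ of balls such that
\begin{align*}
|\tilde{T}_{j}^{N}f(x)|\lesssim \big(\|\tilde{T}_{j}^{N}\|_{L^{2}\to L^{2}}+C_{j}^{N}+\|\omega_{j}^{N}\|_{{\rm Dini}}\big)\sum_{B\in\mathcal{S}}\langle |f|\rangle_{B}\,\chi_{B}(x),
\end{align*}
with the analogous bilinear-form statement if one prefers. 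Then one applies the Hyt\"onen--P\'erez $A_{p}$--$A_{\infty}$ bound for the sparse operator, valid in the homogeneous-space setting, in the form $\big\|\sum_{B\in\mathcal{S}}\langle|f|\rangle_{B}\chi_{B}\big\|_{L^{p}(w)}\leq C_{\mathbb{Q},p}\{w\}_{A_{p}}\|f\|_{L^{p}(w)}$; combined with the scalar bound of the previous paragraph this is precisely the asserted inequality. For $j=0$ the subtraction defining $K_{j}^{N}$ is absent, but the argument is unchanged.

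The arithmetic is routine; the only point needing care is that the two imported black boxes hold on a homogeneous group, which is a doubling metric measure space, with the stated quantitative dependence --- linear in $\|T\|_{L^{2}\to L^{2}}+C_{T}+\|\omega\|_{{\rm Dini}}$ for the sparse domination, and exactly $\{w\}_{A_{p}}$ (rather than the weaker $[w]_{A_{p}}^{\max\{1,1/(p-1)\}}$) for the weighted sparse bound. Both are standard by now: homogeneous groups carry the dyadic/adjacent-cube structure that makes Lerner's argument go through, and the Hyt\"onen--P\'erez estimate has been established on spaces of homogeneous type, so no genuinely new difficulty arises here --- the substantive work of the section is Proposition~\ref{ll1} and Lemma~\ref{cal}, which are already in place.
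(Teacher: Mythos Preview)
Your proposal is correct and follows essentially the same route as the paper: feed the quantitative inputs from Proposition~\ref{ll1} and Lemma~\ref{cal} into the sparse-domination weighted bound for Dini Calder\'on--Zygmund operators. In fact you are more careful than the paper on one point: the paper's proof simply cites Lemma~\ref{domin} (which as stated only gives the coarser factor $[w]_{A_{p}}^{\max\{1,1/(p-1)\}}$) yet writes $\{w\}_{A_{p}}$ in the displayed chain, implicitly relying on exactly the sharper Hyt\"onen--P\'erez sparse bound that you spell out explicitly.
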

\begin{proof}
By Proposition \ref{ll1}, Lemmata \ref{domin} and \ref{cal},
\begin{align*}
\|\tilde{T}_{j}^{N}\|_{L^{p}(w)}
&\leq C_{\mathbb{Q},p}(\|\tilde{T}_{j}^{N}\|_{L^{2}\rightarrow L^{2}}+C_{j}^{N}+\|\omega_{j}^{N}\|_{{\rm Dini}})\{w\}_{A_{p}}\|f\|_{L^{p}(w)}\\
&\leq C_{\mathbb{Q},p,q}(2^{-\alpha N(j-1)}\|K_{0}\|_{q}+\|K_{0}\|_{q}+2^{\frac{N(j)\mathbb{Q}}{q}}(1+N(j))\|K_{0}\|_{q})\{w\}_{A_{p}}\|f\|_{L^{p}(w)}\\
&\leq C_{\mathbb{Q},p,q}2^{\frac{N(j)\mathbb{Q}}{q}}(1+N(j))\|K_{0}\|_{q}\{w\}_{A_{p}}\|f\|_{L^{p}(w)}.
\end{align*}
This finishes the proof of Lemma \ref{wei}.
\end{proof}
\begin{lemma}\label{unwei}
Let $1<p<\infty$ and $q>1$, then there exist constants $C_{\mathbb{Q},p,q}$ and $\beta_{p}>0$ such that
\begin{align*}
\|\tilde{T}_{j}^{N}(f)\|_{L^{p}}\leq C_{\mathbb{Q},p,q}2^{-\beta_{p}N(j-1)}2^{\frac{N(j)\mathbb{Q}}{q}}(1+N(j))\|K_{0}\|_{q}\|f\|_{L^{p}}.
\end{align*}
\end{lemma}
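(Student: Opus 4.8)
The plan is to obtain the good unweighted $L^{p}$ estimate of Lemma \ref{unwei} by interpolating the strong $L^{2}$ decay from Proposition \ref{ll1} against the (lossy) $L^{p}$ bounds one already has for $\tilde{T}_{j}^{N}$ from the Calder\'on--Zygmund theory. Concretely, I would first record that, by Lemma \ref{cal} together with Lemma \ref{domin} applied with the trivial weight $w\equiv 1$ (where $[1]_{A_{p}}=1$), the operator $\tilde{T}_{j}^{N}$ is bounded on $L^{r}$ for every $r\in(1,\infty)$ with
\begin{align*}
\|\tilde{T}_{j}^{N}\|_{L^{r}\rightarrow L^{r}}\leq C_{\mathbb{Q},r}\big(\|\tilde{T}_{j}^{N}\|_{L^{2}\rightarrow L^{2}}+C_{j}^{N}+\|\omega_{j}^{N}\|_{{\rm Dini}}\big)\leq C_{\mathbb{Q},r,q}\,2^{\frac{N(j)\mathbb{Q}}{q}}(1+N(j))\|K_{0}\|_{q}.
\end{align*}
This is the ``bad'' side: it has the harmless growth factor $2^{N(j)\mathbb{Q}/q}(1+N(j))$ but no decay in $j$. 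On the other side, Proposition \ref{ll1} gives the ``good'' $L^{2}$ bound $\|\tilde{T}_{j}^{N}\|_{L^{2}\rightarrow L^{2}}\leq C_{\mathbb{Q},q}2^{-\alpha N(j-1)}\|K_{0}\|_{q}$, which has exponential decay in $N(j-1)$ but of course no help at $p\neq 2$.

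Next I would fix $p\in(1,\infty)$ and choose an auxiliary exponent $r=r(p)\in(1,\infty)$, $r\neq 2$, on the same side of $2$ as $p$ (say $r$ between $1$ and $p$ if $p<2$, or between $p$ and $\infty$ if $p>2$), and write $\frac1p=\frac{1-\theta}{2}+\frac{\theta}{r}$ for a suitable $\theta=\theta(p,r)\in(0,1)$. Applying the Riesz--Thorin interpolation theorem to the two bounds above yields
\begin{align*}
\|\tilde{T}_{j}^{N}\|_{L^{p}\rightarrow L^{p}}\leq \|\tilde{T}_{j}^{N}\|_{L^{2}\rightarrow L^{2}}^{1-\theta}\,\|\tilde{T}_{j}^{N}\|_{L^{r}\rightarrow L^{r}}^{\theta}\leq C_{\mathbb{Q},p,q}\,\big(2^{-\alpha N(j-1)}\big)^{1-\theta}\big(2^{\frac{N(j)\mathbb{Q}}{q}}(1+N(j))\big)^{\theta}\|K_{0}\|_{q}.
\end{align*}
Setting $\beta_{p}:=\alpha(1-\theta)>0$ and using $(1+N(j))^{\theta}\leq 1+N(j)$ and $2^{\theta N(j)\mathbb{Q}/q}\leq 2^{N(j)\mathbb{Q}/q}$, this is exactly the claimed inequality
\begin{align*}
\|\tilde{T}_{j}^{N}(f)\|_{L^{p}}\leq C_{\mathbb{Q},p,q}2^{-\beta_{p}N(j-1)}2^{\frac{N(j)\mathbb{Q}}{q}}(1+N(j))\|K_{0}\|_{q}\|f\|_{L^{p}},
\end{align*}
and the constant $\beta_{p}$ depends only on $p$ (through $\theta$ and $\alpha$), as required.

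The only genuine subtlety — and the point I would treat carefully rather than wave through — is making sure the factor multiplying the $L^{2}$ bound does not secretly destroy the decay. Since $N(j-1)<N(j)$ and $N(j)$ will eventually be chosen (in the proof of Theorem \ref{main}) to grow like a small multiple of $q$, the combination $2^{-\beta_{p}N(j-1)}2^{N(j)\mathbb{Q}/q}$ is still summable in $j$ for an appropriate choice of the sequence $\{N(j)\}$; but at the level of Lemma \ref{unwei} itself this is not needed, since here $N(j)$ is an arbitrary fixed increasing sequence and the statement is just the pointwise-in-$j$ bound obtained above. So the main obstacle is really bookkeeping: choosing $r$ and $\theta$ so that both interpolation endpoints are legitimate (both exponents strictly between $1$ and $\infty$, so Lemma \ref{domin} and Proposition \ref{ll1} apply) and verifying that the exponent arithmetic gives a strictly positive $\beta_{p}$ depending only on $p$. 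Everything else is a direct invocation of Proposition \ref{ll1}, Lemma \ref{cal}, Lemma \ref{domin}, and Riesz--Thorin.
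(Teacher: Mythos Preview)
Your proposal is correct and follows essentially the same approach as the paper: interpolate the decaying $L^{2}$ bound from Proposition \ref{ll1} against a lossy $L^{r}$ bound coming from the Dini-type Calder\'on--Zygmund theory (Lemma \ref{cal} and Lemma \ref{domin} with $w\equiv 1$), then set $\beta_{p}=\alpha(1-\theta)$. The paper makes the specific choices $r=2p$ for $p>2$ and $r=\tfrac{2p}{1+p}$ for $p<2$ (and invokes Lemma \ref{wei} rather than Lemma \ref{domin} directly, which is cosmetic since $w\equiv1$), but your more general choice of $r$ on the far side of $p$ from $2$ works equally well.
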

\begin{proof}
We first consider the case $p>2$ and let $s=2p$ so that $2<p<s$.  This implies that $\frac{1}{p}=\frac{1-\theta}{2}+\frac{\theta}{s}$, for $0<\theta:=\frac{p-2}{p-1}<1$. Then, it follows from Proposition \ref{ll1}, Lemma \ref{wei} with $w(x)\equiv 1$ and complex interpolation that
\begin{align*}
\|\tilde{T}_{j}^{N}\|_{L^{p}\rightarrow L^{p}}
&\leq \|\tilde{T}_{j}^{N}\|_{L^{2}\rightarrow L^{2}}^{1-\theta}\|\tilde{T}_{j}^{N}\|_{L^{2p}\rightarrow L^{2p}}^{\theta}\\
&\leq (C_{\mathbb{Q},q}2^{-\alpha N(j-1)}\|K_{0}\|_{q})^{1-\theta}(C_{\mathbb{Q},2p,q}2^{\frac{N(j)\mathbb{Q}}{q}}(1+N(j))\|K_{0}\|_{q})^{\theta}\\
&\leq C_{\mathbb{Q},p,q}2^{-\beta_{p} N(j-1)}2^{\frac{N(j)\mathbb{Q}}{q}}(1+N(j))\|K_{0}\|_{q},
\end{align*}
where $\beta_{p}=\alpha(1-\theta)=\alpha/(p-1)$.

For the case $p<2$, let $s:=\frac{2p}{1+p}$  so that $1<s<p<2$. In this case, $\frac{1}{p}=\frac{1-\theta}{2}+\frac{\theta}{s}$, for $0<\theta:=2-p<1$. Applying the interpolation theorem between $L^{2}$ and $L^{s}$, we obtain a similar $L^{p}$ estimate.
\end{proof}
\subsection{Proof of Theorem \ref{main}}
Let us denote $\varepsilon:=\frac{1}{2}c_{\mathbb{Q}}/(w)_{A_{p}}$. It follows from Lemmata \ref{wei} and \ref{wei2} that for this choice of $\varepsilon$,
\begin{align*}
\|\tilde{T}_{j}^{N}\|_{L^{p}(w^{1+\varepsilon})\rightarrow L^{p}(w^{1+\varepsilon})}
&\leq C_{\mathbb{Q},p}2^{\frac{N(j)\mathbb{Q}}{q}}(1+N(j))\|K_{0}\|_{q}\{w^{1+\varepsilon}\}_{A_{p}}\\
&\leq C_{\mathbb{Q},p}2^{\frac{N(j)\mathbb{Q}}{q}}(1+N(j))\|K_{0}\|_{q}\{w\}_{A_{p}}^{1+\varepsilon}.
\end{align*}
Besides, by Lemma \ref{unwei}, we also have
\begin{align*}
\|\tilde{T}_{j}^{N}\|_{L^{p}\rightarrow L^{p}}\leq C_{\mathbb{Q},p}2^{-\beta_{p}N(j-1)}2^{\frac{N(j)\mathbb{Q}}{q}}(1+N(j))\|K_{0}\|_{L^{q}}.
\end{align*}
We now apply the interpolation theorem with change of measures (\cite[Theorem 2.11]{SW}) to $T=\tilde{T}_{j}^{N}$ with $p_{0}=p_{1}=p$, $w_{0}=1$ and $w_{1}=w^{1+\varepsilon}$ so that $\theta=\varepsilon/(1+\varepsilon)$ and
\begin{align*}
\|\tilde{T}_{j}^{N}\|_{L^{p}(w)\rightarrow L^{p}(w)}
&\leq \|\tilde{T}_{j}^{N}\|_{L^{p}\rightarrow L^{p}}^{\varepsilon/(1+\varepsilon)}\|\tilde{T}_{j}^{N}\|_{L^{p}(w^{1+\varepsilon})\rightarrow L^{p}(w^{1+\varepsilon})}^{1/(1+\varepsilon)}\\
&\leq C_{\mathbb{Q},p,q}\|K_{0}\|_{q}2^{\frac{N(j)\mathbb{Q}}{q}}2^{-\beta_{p}N(j-1)\varepsilon/(1+\varepsilon)}(1+N(j))\{w\}_{A_{p}}\\
&\leq C_{\mathbb{Q},p,q}\|K_{0}\|_{q}2^{\frac{N(j)\mathbb{Q}}{q}}2^{-\beta_{\mathbb{Q},p}N(j-1)/(w)_{A_{p}}}(1+N(j))\{w\}_{A_{p}},
\end{align*}
for some constants $\beta_{p},\beta_{\mathbb{Q},p}>0$.

Thus,
\begin{align}\label{last}
\|T\|_{L^{p}(w)\rightarrow L^{p}(w)}&\leq \sum_{j=0}^{\infty}\|\tilde{T}_{j}^{N}\|_{L^{p}(w)\rightarrow L^{p}(w)}\nonumber\\
&\leq C_{\mathbb{Q},p,q}\|K_{0}\|_{q}\{w\}_{A_{p}}\sum_{j=0}^{\infty}2^{\frac{N(j)\mathbb{Q}}{q}}2^{-\beta_{\mathbb{Q},p}N(j-1)/(w)_{A_{p}}}(1+N(j)).
\end{align}
Note that if we choose $N(j)=2^{j}$ for $j\geq 1$ and $q>\frac{2\mathbb{Q}(w)_{A_{p}}}{\beta_{\mathbb{Q},p}}:=c_{\mathbb{Q},p}(w)_{A_{p}}$, then
\begin{align*}
\sum_{j=0}^{\infty}2^{\frac{N(j)\mathbb{Q}}{q}}2^{-\beta_{\mathbb{Q},p}N(j-1)/(w)_{A_{p}}}(1+N(j))
&\leq \sum_{j=0}^{\infty}2^{j}2^{-\beta_{\mathbb{Q},p,q}2^{j}(w)_{A_{p}}^{-1}}\\
&\leq C_{\mathbb{Q},p,q}\left(\sum_{j:2^{j}\leq (w)_{A_{p}}}2^{j}+\sum_{j:2^{j}>(w)_{A_{p}}}2^{j}\left(\frac{(w)_{A_{p}}}{2^{j}}\right)^{2}\right)\\
&\leq C_{\mathbb{Q},p,q}(w)_{A_{p}},
\end{align*}
for some constant $\beta_{\mathbb{Q},p,q}>0$.

This, in combination with the estimate \eqref{last}, completes the proof of Theorem \ref{main}.
\hfill $\square$

\section{Application: quantitative estimate of singular integrals studied by Sato}\label{satosec}
\setcounter{equation}{0}

In the previous section, we proved quantitative weighted estimates for classical rough homogeneous singular integrals on homogeneous group. Indeed, our argument can also be applied to draw a parallel conclusion for a larger class of singular integrals considered by \cite{sato}.

\begin{proof}[Proof of Theorem \ref{main2}]
The proof is a minor modification of Theorem \ref{main}. We list the differences of the proof here. To begin with, we decompose the kernel $L$ into the summation of $B_{j}K_{0}$,
where $B_{j}$ is the operator
\begin{align*}
B_{j}F=2^{-j}h(\rho(x))\int_{0}^{\infty}\varphi(2^{-j}t)\Delta[t]Fdt.
\end{align*}
Then the operators $T_{j}$, $\tilde{T}_{j}$, $\tilde{T}_{j}^{N}$ and  $G_{j}$, $G_{j,k}$ can be constructed with $A_{j}$ (defined by \eqref{aj}) replaced by $B_{j}$.
We note that
\begin{align*}
&G_{j,k}^{*}G_{j,k^{\prime}}(f)=\sum_{\ell\in\mathbb{Z}}\sum_{\ell^{\prime}\in\mathbb{Z}}f\ast \Psi_{k^{\prime}-j}\ast B_{k^{\prime}}K_{0}\ast \Psi_{\ell}\ast \Psi_{\ell^{\prime}}\ast \widetilde{B_{k}K_{0}}\ast \Psi_{k-j}.
\end{align*}
This, combined with  Lemma 1 in \cite{sato} and Young's inequality, yields that
\begin{align}\label{g1}
\|G_{j,k}^{*}G_{j,k^{\prime}}(f)\|_{2}
&\leq C_{\mathbb{Q},q}2^{-2\alpha |j|}2^{-c|k-k^{\prime}|}\|K_{0}\|_{q}^{2}\|h\|_{\Lambda_{q}^{\eta/q^{\prime}}}^{2}\|f\|_{2}.
\end{align}
Similar to the proof of Proposition \ref{ll1}, we obtain that for any $j\geq 1$,
\begin{align}\label{ggggg22}
\|\tilde{T}_{j}^{N}(f)\|_{2}\leq C_{\mathbb{Q},q}2^{-\alpha N(j-1)}\|K_{0}\|_{q}\|h\|_{\Lambda_{q}^{\eta/q^{\prime}}}\|f\|_{2}.
\end{align}

In the next step, note that for $2/q+1/q_{0}=1$,
\begin{align*}
&\int_{2^{k-1}\leq \rho(y)\leq 2^{k+2}}|\phi(2^{-[k-N(j)-1]}\circ xy^{-1})||K(\rho(y)^{-1}\circ y)||h(\rho(y))|dy\\
&\leq \left(\int_{2^{k-1}\leq \rho(y)\leq 2^{k+2}}|K(\rho(y)^{-1}\circ y)|^{q}dy\right)^{1/q}\left(\int_{2^{k-1}\leq \rho(y)\leq 2^{k+2}}|h(\rho(y))|^{q}dy\right)^{1/q}\left(\int_{\HH}|\phi(2^{-[k-N(j)-1]}\circ y)|^{q_{0}}dy\right)^{1/q_{0}}\\
&\leq C2^{kQ}2^{-\frac{N(j)Q}{q_{0}}}\|K_{0}\|_{q}\|h\|_{d_{q}}.
\end{align*}

Then a simple modification of Lemma \ref{cal} yields that the operator $\tilde{T}_{j}^{N}$ is a Calder\'{o}n--Zygmund operator satisfying for any $q>2$,
\begin{align*}
C_{j}^{N}:=C_{\tilde{T}_{j}^{N}}\leq C_{\mathbb{Q},q}2^{\frac{2N(j)\mathbb{Q}}{q}}\|K_{0}\|_{q}\|h\|_{d_{q}},\ \ \omega_{j}^{N}(t):=\omega_{\tilde{T}_{j}^{N}}(t)\leq C_{\mathbb{Q},q}2^{\frac{2N(j)\mathbb{Q}}{q}}\min\{1,2^{N(j)}t\}\|K_{0}\|_{q}\|h\|_{d_{q}},
\end{align*}
which satisfies
\begin{align*}
\int_{0}^{1}\omega_{j}^{N}(t)\frac{dt}{t}\leq C_{\mathbb{Q},q}2^{\frac{2N(j)\mathbb{Q}}{q}}(1+N(j))\|K_{0}\|_{q}\|h\|_{d_{q}}.
\end{align*}
Next we obtain simple variants of Lemmata \ref{wei} and \ref{unwei}, and then the proof of Theorem \ref{main2} is complete.
\end{proof}

\section{An investigation in the bi-parameter setting: proof of Theorem \ref{main3}}\label{bi}
\setcounter{equation}{0}

In this section, we show that our argument can be also applied to obtain a parallel result in the bi-parameter setting.
To begin with, let $\HH_{i}=\RR^{n_{i}}$, $i=1,2$, be homogeneous groups with dilations $\circ_{1}$, $\circ_{2}$, and norm functions $\rho_{1}$, $\rho_{2}$, respectively. Each $\circ_{i}$ is an automorphism of the group structure and is of the form
\begin{align*}
t\circ_{i} (x_{i}^{1},\ldots,x_{i}^{n})=(t^{\alpha_{1}^{i}}x_{i}^{1},\ldots,t^{\alpha_{n}^{i}}x_{i}^{n}),\ \forall (x_{i}^{1},\ldots,x_{i}^{n})\in\HH_{i},
\end{align*}
for some constants $0<\alpha_{1}^{i}\leq \alpha_{2}^{i}\leq\ldots\leq \alpha_{n}^{i}$. We call the quantity $\mathbb{Q}_{i}=\sum_{j=1}^{n_{i}}\alpha_{j}^{i}$ the homogeneous dimension of $\HH_{i}$.
We define a left-invariant quasi-distance $d_{i}$ on $\HH_{i}$ by $d_{i}(x,y)=\rho_{i}(x^{-1}y)$, which means that
there exists a constant $A_{0}^{(i)}\geq 1$ such that for any $x,y,z\in\HH_{i}$,
$d_{i}(x,y)\leq A_{0}^{(i)}[d_{i}(x,z)+d_{i}(z,y)].
$ 
 Let $B_{i}(x_{i},r)$ be the ball with center $x_{i}\in\HH_{i}$ and radius $t\in\mathbb{R}_{+}$ defined by
$B_{i}(x_{i},r)=\{y_{i}\in\HH_{i}:d_{i}(x_{i},y_{i})<r\}.
$ 
\begin{definition}
Let $w(x_1,x_2)$ be a nonnegative locally integrable function
  on~$\HH_{1}\times\HH_{2}$. For $1 < p < \infty$, we
  say that $w$ is a product $A_p$ \emph{weight}, written $w\in
  A_p$, if
\begin{align*}
[w]_{A_{p}}:=\sup\limits_{R}\left(\fint_{R}wdx_1dx_2\right)\left(\fint_{R}\left(\frac{1}{w}\right)^{1/(p-1)}dx_1dx_2\right)^{p-1}<\infty,
\end{align*}
where the supremum is taken over all rectangles~$R\subset \HH_{1}\times\HH_{2}$. The quantity $[w]_{A_p}$ is called the \emph{$A_p$~constant
  of~$w$}.
  For $p = 1$, if $M_s(w)(x_1,x_2)\leq w(x_1,x_2)$ for a.e. $(x_1,x_2)\in \HH_{1}\times\HH_{2}$, then we say that $w$ is a product $A_1$ \emph{weight},
  written $w\in A_1$, where $M_s$ denotes the strong maximal function on $\HH_{1}\times\HH_{2}$. Besides, let $A_\infty := \cup_{1\leq p<\infty} A_p$ and we have
\begin{align*}
[w]_{A_{\infty}}:=\sup\limits_{R}\left(\fint_{R}wdx_1dx_2\right){\rm exp}\left(\fint_{R}{\rm log}\left(\frac{1}{w}\right)dx_1dx_2\right)<\infty.
\end{align*}
\end{definition}

Throughout this section, for appropriate functions $f$ and $g$ on $\HH_{1}\times\HH_{2}$, the convolution $f\ast g$ is defined by
\begin{align*}
(f\ast g)(x,y)=\int_{\HH_{1}\times\HH_{2}}f(xu^{-1},yv^{-1})g(u,v)dudv.
\end{align*}





We now provide the proof of Theorem \ref{main3}.

\medskip
\begin{proof}[{\bf Proof of Theorem \ref{main3}}]

It suffices for us to provide the decomposition of $T$ into a suitable collection $\{\tilde T_{j_1,j_2}^N\}$ and then to verify
that each $\tilde  T_{j_1,j_2}^N$ is a paraproduct-free operator in the class of bi-parameter singular integral operators with the modulus of continuity $\omega_{j_{1},j_{2}}^{N,k}$ on $\HH_k$, $k=1,2$, satisfying the modified Dini$_1$ condition \cite[Section 5]{AMV}:
\begin{align}\label{Dini1}
\|\omega_{j_{1},j_{2}}^{N,k}\|_{{\rm Dini}_1}:=\int_0^1 \omega_{j_{1},j_{2}}^{N,k}(t) \Big( 1+ \log {1\over t} \Big){dt\over t}
\leq C2^{\frac{N(j_{1})\mathbb{Q}_{1}}{q}}2^{\frac{N(j_{2})\mathbb Q_{2}}{q}}\|K^{0}\|_{q}(1+N(j_{k}))^{2}.
\end{align}

\medskip
We first partion the kernel $K$ dyadically.
\begin{align*}
K(x,y)=\frac{1}{(\ln2)^{2}}\int_{0}^{\infty}\int_{0}^{\infty}\Delta[t_{1},t_{2}]K^{0}(x,y)\frac{dt_{1}}{t_{1}}\frac{dt_{2}}{t_{2}},
\end{align*}
where for each $t_{1},t_{2}\in\mathbb{R}_{+}$, we define the product scaling map $\Delta[t_{1},t_{2}]$ by $\Delta[t_{1},t_{2}]=\Delta^{(1)}[t_{1}]\otimes \Delta^{(2)}[t_{2}],$
and
$\Delta^{(i)}[t_{i}]f(x_{i}):=t_{i}^{-\mathbb{Q}_{i}}f(t_{i}^{-1}\circ_{i}x_{i})$, $i=1,2$. Therefore we have the decomposition
\begin{align*}
T(f)(x,y)=\sum\limits_{(j_{1},j_{2})\in\mathbb{Z}^{2}}f\ast A_{j_{1},j_{2}}K^{0}(x,y)=:\sum\limits_{(j_{1},j_{2})\in\mathbb{Z}^{2}}T_{j_{1},j_{2}}(f)(x,y),
\end{align*}
where $A_{j_{1},j_{2}}$ is the operator
\begin{align}
A_{j_{1},j_{2}}F(x,y)=2^{-j_{1}}2^{-j_{2}}\int_{0}^{\infty}\int_{0}^{\infty}\varphi(2^{-j_{1}}t_{1})\varphi(2^{-j_{2}}t_{2})\Delta[t_{1},t_{2}]F(x,y)dt_{1}dt_{2}.
\end{align}

Let $\phi^{(i)}\in C_{c}^{\infty}(\mathbb{H}_{i})$ be a smooth cut-off function supported in $B_{i}(0,\frac{1}{100})\backslash B_{i}(0,\frac{1}{200})$ such that $\int_{\HH_{i}}\phi^{(i)}dx_{i}=1$, $\phi^{(i)}=\widetilde{\phi^{(i)}}$, $\phi^{(i)}(x_{i})\geq 0$ for all $x_{i}\in\HH_{i}$.
%
Denote
\begin{align*}
\Psi_{j}^{(i)}=\Delta^{(i)}[2^{j-1}]\phi^{(i)}-\Delta^{(i)}[2^{j}]\phi^{(i)},
\end{align*}
then $\Psi_{j}^{(i)}$ satisfies $\supp\Psi_{j}^{(i)}\subseteq B_{i}(0,C2^{j})$, has mean zero, and $\widetilde{\Psi_{j}^{(i)}}=\Psi_{j}^{(i)}$.

We define the partial sum operators $S_{j_{1},j_{2}}$ by
\begin{align*}
S_{j_{1},j_{2}}(f)=f\ast (\Delta^{(1)}[2^{j_{1}-1}]\phi^{(1)}\otimes \Delta^{(2)}[2^{j_{2}-1}]\phi^{(2)}).
\end{align*}
Next, we define the difference operators $\Delta_{k_{1},k_{2}}$ and $\Delta_{k_{1},k_{2}}^{N(j_{1}),N(j_{2})}$ by
\begin{align*}
\Delta_{k_{1},k_{2}}f=f\ast \Psi_{k_{1},k_{2}},\
\Delta_{k_{1},k_{2}}^{N(j_{1}),N(j_{2})}f=f\ast \Psi_{k_{1},k_{2}}^{N(j_{1}),N(j_{2})},
\end{align*}
where we denote
\begin{align*}
\Psi_{k_{1},k_{2}}=\Psi_{k_{1}}^{(1)}\otimes \Psi_{k_{2}}^{(2)},\
\Psi_{k_{1},k_{2}}^{N(j_{1}),N(j_{2})}=\sum_{\ell_{1}=N(j_{1}-1)+1}^{N(j_{1})}\sum_{\ell_{2}=N(j_{2}-1)+1}^{N(j_{2})}\Psi_{k_{1}-\ell_{1},k_{2}-\ell_{2}}.
\end{align*}
We also define the mixed difference operators $(S\Delta)_{k_{1},k_{2}}^{N(j_{2})}$ and $(\Delta S)_{k_{1},k_{2}}^{N(j_{1})}$ by
$$(S\Delta)_{k_{1},k_{2}}^{N(j_{2})}f:=f\ast \big(\Delta^{(1)}[2^{k_{1}-1}]\phi^{(1)}\otimes (\Delta^{(2)}[2^{k_{2}-N(j_{2})-1}]\phi^{(2)}-\Delta^{(2)}[2^{k_{2}-N(j_{2}-1)-1}]\phi^{(2)})\big),$$
$$(\Delta S)_{k_{1},k_{2}}^{N(j_{1})}f:=f\ast \big((\Delta^{(1)}[2^{k_{1}-N(j_{1})-1}]\phi^{(1)}-\Delta^{(1)}[2^{k_{1}-N(j_{1}-1)-1}]\phi^{(1)})\otimes \Delta^{(2)}[2^{k_{2}-1}]\phi^{(2)} \big).$$
Then we have the following inequality
\begin{align*}
T_{k_{1},k_{2}}=T_{k_{1},k_{2}}S_{k_{1},k_{2}}+\sum_{j_{1}=1}^{\infty}T_{k_{1},k_{2}}(\Delta S)_{k_{1},k_{2}}^{N(j_{1})}+\sum_{j_{2}=1}^{\infty}T_{k_{1},k_{2}}(S\Delta)_{k_{1},k_{2}}^{N(j_{2})}+\sum_{j_{1}=1}^{\infty}\sum_{j_{2}=1}^{\infty}T_{k_{1},k_{2}}\Delta_{k_{1},k_{2}}^{N(j_{1}),N(j_{2})}.
\end{align*}
In this way, we get $$T=\sum_{j_{1}=0}^{\infty}\sum_{j_{2}=0}^{\infty}\tilde{T}_{j_{1},j_{2}}=\sum_{j_{1}=0}^{\infty}\sum_{j_{2}=0}^{\infty}\tilde{T}_{j_{1},j_{2}}^{N},$$ where
\begin{align*}
\tilde{T}_{0,0}:=\tilde{T}_{0,0}^{N}:=\sum_{k_{1}\in\mathbb{Z}}\sum_{k_{2}\in\mathbb{Z}}T_{k_{1},k_{2}}S_{k_{1},k_{2}},\
\end{align*}
and for $j_{1},j_{2}\geq 1$,
\begin{align*}
\tilde{T}_{0,j_{1}}:=\sum_{k_{1}\in\mathbb{Z}}\sum_{k_{2}\in\mathbb{Z}}T_{k_{1},k_{2}}(\Delta S)_{k_{1},k_{2}}^{j_{1}},\quad \tilde{T}_{0,j_{1}}^{N}:=\sum_{k_{1}\in\mathbb{Z}}\sum_{k_{2}\in\mathbb{Z}}T_{k_{1},k_{2}}(\Delta S)_{k_{1},k_{2}}^{N(j_{1})}=\sum_{\ell_{1}=N(j_{1}-1)+1}^{N(j_{1})}\tilde{T}_{0,\ell_{1}};
\end{align*}
\begin{align*}
\tilde{T}_{j_{2},0}:=\sum_{k_{1}\in\mathbb{Z}}\sum_{k_{2}\in\mathbb{Z}}T_{k_{1},k_{2}}( S\Delta)_{k_{1},k_{2}}^{j_{2}},\quad \tilde{T}_{j_{2},0}^{N}:=\sum_{k_{1}\in\mathbb{Z}}\sum_{k_{2}\in\mathbb{Z}}T_{k_{1},k_{2}}( S\Delta)_{k_{1},k_{2}}^{N(j_{2})}=\sum_{\ell_{2}=N(j_{2}-1)+1}^{N(j_{2})}\tilde{T}_{\ell_{2},0};
\end{align*}
\begin{align*}
\tilde{T}_{j_{1},j_{2}}:=\sum_{k_{1}\in\mathbb{Z}}\sum_{k_{2}\in\mathbb{Z}}T_{k_{1},k_{2}}\Delta_{k_{1}-j_{1},k_{2}-j_{2}},\quad
\tilde{T}_{j_{1},j_{2}}^{N}:=\sum_{k_{1}\in\mathbb{Z}}\sum_{k_{2}\in\mathbb{Z}}T_{k_{1},k_{2}}\Delta_{k_{1},k_{2}}^{N(j_{1}),N(j_{2})}=\sum_{\ell_{1}=N(j_{1}-1)+1}^{N(j_{1})}\sum_{\ell_{2}=N(j_{2}-1)+1}^{N(j_{2})}\tilde{T}_{\ell_{1},\ell_{2}}.
\end{align*}

Next, we show that for each $j_1,j_2$, the operator $\tilde{T}_{j_{1},j_{2}}^{N}$ is bounded on $L^2$ with the
operator norm dominated by $C_{\mathbb{Q}_1,\mathbb{Q}_2,q}2^{-\alpha N(j_{1}-1)}2^{-\alpha N(j_{2}-1)}\|K^{0}\|_{q}$.
\begin{proposition}\label{pppro}
Let $q>1$. Then there exist constants $C_{\mathbb{Q}_1,\mathbb{Q}_2,q}>0$ and $\alpha>0$ such that for any $j_{1},j_{2}\geq 0$,
\begin{align*}
\|\tilde{T}_{j_{1},j_{2}}(f)\|_{2}\leq C_{\mathbb{Q}_1,\mathbb{Q}_2,q}2^{-\alpha j_{1}}2^{-\alpha j_{2}}\|K^{0}\|_{q}\|f\|_{2}
\end{align*}
and for any $j_{1},j_{2}\geq 1$,
\begin{align*}
\|\tilde{T}_{j_{1},j_{2}}^{N}(f)\|_{2}\leq C_{\mathbb{Q}_1,\mathbb{Q}_2,q}2^{-\alpha N(j_{1}-1)}2^{-\alpha N(j_{2}-1)}\|K^{0}\|_{q}\|f\|_{2}.
\end{align*}
\end{proposition}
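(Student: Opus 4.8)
The plan is to mimic the single-parameter argument of Proposition~\ref{ll1} tensor-wise, replacing the Cotlar--Knapp--Stein Lemma by its two-parameter vector-valued version. As in the one-parameter case, it suffices to prove the first inequality, since the second follows by summing the geometric series $\sum_{\ell_1=N(j_1-1)+1}^{N(j_1)}\sum_{\ell_2=N(j_2-1)+1}^{N(j_2)}2^{-\alpha\ell_1}2^{-\alpha\ell_2}$. For $j_1,j_2\geq 1$ one has $\tilde T_{j_1,j_2}=G_{j_1,j_2}$, where $G_{j_1,j_2}=\sum_{k_1,k_2}G_{j_1,j_2,k_1,k_2}$ with $G_{j_1,j_2,k_1,k_2}(f)=T_{k_1,k_2}\Delta_{k_1-j_1,k_2-j_2}(f)=f\ast(\Psi_{k_1-j_1}^{(1)}\otimes\Psi_{k_2-j_2}^{(2)})\ast A_{k_1,k_2}K^0$. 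The first step is therefore to establish a two-parameter almost-orthogonality bound
\begin{align*}
\|G_{j_1,j_2,k_1,k_2}^\ast G_{j_1,j_2,k_1',k_2'}\|_{2\to 2}+\|G_{j_1,j_2,k_1',k_2'}G_{j_1,j_2,k_1,k_2}^\ast\|_{2\to 2}\leq C2^{-2\alpha|j_1|}2^{-2\alpha|j_2|}2^{-c|k_1-k_1'|}2^{-c|k_2-k_2'|}\|K^0\|_q^2,
\end{align*}
after which the bi-parameter Cotlar--Knapp--Stein Lemma (applied first in $k_1$, then in $k_2$, summing the geometric series in the off-diagonal parameters) yields $\|G_{j_1,j_2}\|_{2\to 2}\leq C2^{-\alpha|j_1|}2^{-\alpha|j_2|}\|K^0\|_q$. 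For the corner cases $j_1=0$ or $j_2=0$, I would decompose $\tilde T_{0,j_2}$, $\tilde T_{j_1,0}$, $\tilde T_{0,0}$ into sums over $G_{m_1,m_2}$ with $m_1\leq 0$ and/or $m_2\leq 0$ exactly as in \eqref{equa}, and sum the resulting geometric series.

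The second and main step is to prove the orthogonality bound. Expanding $G_{j_1,j_2,k_1,k_2}^\ast G_{j_1,j_2,k_1',k_2'}(f)$ as a convolution and inserting two Littlewood--Paley resolutions $\sum_{\ell_1,\ell_1'}\Psi_{\ell_1}^{(1)}\ast\Psi_{\ell_1'}^{(1)}$ and $\sum_{\ell_2,\ell_2'}\Psi_{\ell_2}^{(2)}\ast\Psi_{\ell_2'}^{(2)}$ in each factor (as in \eqref{GDE}), one arrives at a fourfold sum of expressions of the form
\begin{align*}
f\ast(\Psi_{k_1'-j_1}^{(1)}\otimes\Psi_{k_2'-j_2}^{(2)})\ast A_{k_1',k_2'}K^0\ast(\Psi_{\ell_1}^{(1)}\otimes\Psi_{\ell_2}^{(2)})\ast(\Psi_{\ell_1'}^{(1)}\otimes\Psi_{\ell_2'}^{(2)})\ast A_{k_1,k_2}\widetilde{K^0}\ast(\Psi_{k_1-j_1}^{(1)}\otimes\Psi_{k_2-j_2}^{(2)}).
\end{align*}
The key tensorial input is a two-parameter version of \eqref{kkkey}: for any $L^q$ function $F$ on the bi-annulus $D_0$ with $\int_{D_0^{(1)}}F=\int_{D_0^{(2)}}F=0$, one has $\|f\ast A_{m_1,m_2}F\ast(\Psi_{k_1}^{(1)}\otimes\Psi_{k_2}^{(2)})\|_2\leq C2^{-\alpha|m_1-k_1|}2^{-\alpha|m_2-k_2|}\|f\|_2\|F\|_q$. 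This should follow from Tao's iterated $(TT^\ast)^N$ method applied in each variable separately — this is essentially \cite[Section 5]{AMV}-type reasoning, or can be derived by a Fubini/product argument from the one-parameter estimate \eqref{kkkey} since $A_{m_1,m_2}$ and the $\Psi$'s are tensor products and $K^0$ has the product cancellation. Granting this, one runs the chain of estimates in \eqref{f1} and \eqref{vbnm} in each parameter: from \eqref{kkkey}-type bounds one gets decay $2^{-\beta|k_1-\ell_1'|}2^{-\beta|k_1'-\ell_1|}2^{-\beta|k_2-\ell_2'|}2^{-\beta|k_2'-\ell_2|}$, and from the smoothness/cancellation of $\Psi_{\ell}^{(i)}\ast\Psi_{\ell'}^{(i)}$ one gets $\|\Psi_{\ell_1}^{(1)}\ast\Psi_{\ell_1'}^{(1)}\|_1\leq C2^{-|\ell_1-\ell_1'|}$ (similarly in the second variable) together with the $2^{-\beta|j_1|}2^{-\beta|j_2|}$ gained from moving a $\Psi_{k_i-j_i}^{(i)}$ past $A_{k_i}$. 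Taking geometric means and summing over $\ell_1,\ell_1',\ell_2,\ell_2'$ gives the claimed bound with an extra $2^{-c|k_1-k_1'|}2^{-c|k_2-k_2'|}$ decay coming from the triangle inequality in the exponents.

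The main obstacle I anticipate is purely bookkeeping: verifying that the iterated $(TT^\ast)^N$ estimate of Tao--Sato genuinely tensorizes, i.e.\ that the product cancellation $\int_{D_0^{(1)}}K^0=\int_{D_0^{(2)}}K^0=0$ is exactly what is needed to get the product decay $2^{-\alpha|m_1-k_1|}2^{-\alpha|m_2-k_2|}$ rather than just decay in one parameter. One has to be careful that the non-commutativity of convolution on $\HH_1\times\HH_2$ does not obstruct the rearrangement of factors — but since everything is a tensor product of one-parameter objects and the rearrangements used (as in \eqref{f1}, \eqref{vbnm}) only move mean-zero bump functions past kernels, the one-parameter arguments apply coordinate by coordinate. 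Once Proposition~\ref{pppro} is in hand, the remaining Calder\'on--Zygmund bookkeeping (verifying \eqref{Dini1} and the size/smoothness bounds for $K_{j_1,j_2}^N$, with the bad factors $2^{N(j_i)\mathbb{Q}_i/q}$), the bi-parameter representation theorem for each $\tilde T_{j_1,j_2}^N$, and the summation over $j_1,j_2$ after choosing $N(j_i)=2^{j_i}$ proceed in direct analogy with Section~\ref{keey}, using \cite[Corollary~3.2]{BP} and the sparse domination for the shifted square function from \cite[Section~5]{BP} in place of Lemma~\ref{domin}.
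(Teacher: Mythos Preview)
Your approach is essentially identical to the paper's: the same Cotlar--Knapp--Stein reduction to the almost-orthogonality bound, the same double Littlewood--Paley insertion, and the same geometric-mean argument combining the two chains of estimates. The one step you flag as uncertain --- the bi-parameter analogue of \eqref{kkkey} --- is exactly Lemma~1 of \cite{sato2}, which the paper invokes directly; note that your proposed Fubini/product shortcut from the one-parameter estimate would not work since $K^0$ is not a tensor product, so one genuinely needs the bi-parameter iterated $TT^\ast$ argument carried out in \cite{sato2}.
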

%
\begin{proof}
It suffices to show the first estimate when $j_{1},j_{2}\geq 1$. For other cases we can repeat the argument in the setting of one-parameter.  For simplicity, we set
\begin{align*}
T_{j_{1},j_{2},k_{1},k_{2}}(f):=T_{k_{1},k_{2}}\Delta_{k_{1}-j_{1},k_{2}-j_{2}}(f),
\end{align*}
then $T_{j_{1},j_{2}}(f)=\sum_{k_{1}\in\mathbb{Z}}\sum_{k_{2}\in\mathbb{Z}}T_{j_{1},j_{2},k_{1},k_{2}}(f)$. By Cotlar-Knapp-Stein Lemma, it suffices to show that:
\begin{align}\label{ggoall}
\|T_{j_{1},j_{2},k_{1},k_{2}}^{*}T_{j_{1},j_{2},k_{1}^{\prime},k_{2}^{\prime}}\|_{2\rightarrow 2}+\|T_{j_{1},j_{2},k_{1}^{\prime},k_{2}^{\prime}}T_{j_{1},j_{2},k_{1},k_{2}}^{*}\|_{2\rightarrow 2}\leq C2^{-2\alpha j_{1}}2^{-2\alpha j_{2}}2^{-c|k_{1}-k_{1}^{\prime}|}2^{-c|k_{2}-k_{2}^{\prime}|}\|K^{0}\|_{q}^{2},
\end{align}
for some constants $C,c>0$ and $\alpha>0$. We only estimate the first term, since the second term is similar. Note that
\begin{align}\label{blp}
T_{j_{1},j_{2},k_{1},k_{2}}^{*}T_{j_{1},j_{2},k_{1}^{\prime},k_{2}^{\prime}}(f)
&=f\ast\Psi_{k_{1}^{\prime}-j_{1},k_{2}^{\prime}-j_{2}}\ast A_{k_{1}^{\prime},k_{2}^{\prime}}K^{0}\ast A_{k_{1},k_{2}}\tilde{K^{0}}\ast \Psi_{k_{1}-j_{1},k_{2}-j_{2}}\nonumber\\
&=\sum_{\ell_{1},\ell_{2},\ell_{1}^{\prime},\ell_{2}^{\prime}\in\mathbb{Z}}f\ast\Psi_{k_{1}^{\prime}-j_{1},k_{2}^{\prime}-j_{2}}\ast A_{k_{1}^{\prime},k_{2}^{\prime}}K^{0}\ast\Psi_{\ell_{1}^{\prime},\ell_{2}^{\prime}}\ast\Psi_{\ell_{1},\ell_{2}}\ast A_{k_{1},k_{2}}\tilde{K^{0}}\ast \Psi_{k_{1}-j_{1},k_{2}-j_{2}}.
\end{align}

On the one hand, by Lemma 1 in \cite{sato2} and its duality version,
\begin{align}\label{gm1}
&\|(f\ast\Psi_{k_{1}^{\prime}-j_{1},k_{2}^{\prime}-j_{2}})\ast (A_{k_{1}^{\prime},k_{2}^{\prime}}K^{0}\ast\Psi_{\ell_{1}^{\prime},\ell_{2}^{\prime}})\ast(\Psi_{\ell_{1},\ell_{2}}\ast A_{k_{1},k_{2}}\tilde{K^{0}})\ast \Psi_{k_{1}-j_{1},k_{2}-j_{2}}\|_{2}\nonumber\\
&\leq C\|(f\ast\Psi_{k_{1}^{\prime}-j_{1},k_{2}^{\prime}-j_{2}})\ast (A_{k_{1}^{\prime},k_{2}^{\prime}}K^{0}\ast\Psi_{\ell_{1}^{\prime},\ell_{2}^{\prime}})\ast(\Psi_{\ell_{1},\ell_{2}}\ast A_{k_{1},k_{2}}\tilde{K^{0}})\|_{2}\nonumber\\
&\leq C2^{-\beta|k_{1}-\ell_{1}|}2^{-\beta|k_{2}-\ell_{2}|}\|K^{0}\|_{q}\|(f\ast\Psi_{k_{1}^{\prime}-j_{1},k_{2}^{\prime}-j_{2}})\ast (A_{k_{1}^{\prime},k_{2}^{\prime}}K^{0}\ast\Psi_{\ell_{1}^{\prime},\ell_{2}^{\prime}})\|_{2}\nonumber\\
&\leq C2^{-\beta|k_{1}-\ell_{1}|}2^{-\beta|k_{2}-\ell_{2}|}2^{-\beta|k_{1}^{\prime}-\ell_{1}^{\prime}|}2^{-\beta|k_{2}^{\prime}-\ell_{2}^{\prime}|}\|K^{0}\|_{q}^{2}\|f\ast\Psi_{k_{1}^{\prime}-j_{1},k_{2}^{\prime}-j_{2}}\|_{2}\nonumber\\
&\leq C2^{-\beta|k_{1}-\ell_{1}|}2^{-\beta|k_{2}-\ell_{2}|}2^{-\beta|k_{1}^{\prime}-\ell_{1}^{\prime}|}2^{-\beta|k_{2}^{\prime}-\ell_{2}^{\prime}|}\|K^{0}\|_{q}^{2}\|f\|_{2}
\end{align}
for some $C>0$ and $\beta>0$.

On the other hand, by the smoothness and the cancellation properties of $\Psi_{\ell_{1}^{\prime},\ell_{2}^{\prime}}$ and $\Psi_{\ell_{1},\ell_{2}}$, we see that $\|\Psi_{\ell_{1}^{\prime},\ell_{2}^{\prime}}\ast\Psi_{\ell_{1},\ell_{2}}\|_{1}\leq C2^{-|\ell_{1}-\ell_{1}^{\prime}|}2^{-|\ell_{2}-\ell_{2}^{\prime}|}$. Therefore,
\begin{align}\label{gm2}
&\|(f\ast\Psi_{k_{1}^{\prime}-j_{1},k_{2}^{\prime}-j_{2}}\ast A_{k_{1}^{\prime},k_{2}^{\prime}}K^{0})\ast(\Psi_{\ell_{1}^{\prime},\ell_{2}^{\prime}}\ast\Psi_{\ell_{1},\ell_{2}})\ast (A_{k_{1},k_{2}}\tilde{K^{0}}\ast \Psi_{k_{1}-j_{1},k_{2}-j_{2}})\|_{2}\nonumber\\
&\leq C2^{-\beta|j_{1}|}2^{-\beta|j_{2}|}\|K^{0}\|_{q}\|(f\ast\Psi_{k_{1}^{\prime}-j_{1},k_{2}^{\prime}-j_{2}}\ast A_{k_{1}^{\prime},k_{2}^{\prime}}K^{0})\ast(\Psi_{\ell_{1}^{\prime},\ell_{2}^{\prime}}\ast\Psi_{\ell_{1},\ell_{2}})\|_{2}\nonumber\\
&\leq C2^{-\beta|j_{1}|}2^{-\beta|j_{2}|}2^{-|\ell_{1}^{\prime}-\ell_{1}|}2^{-|\ell_{2}^{\prime}-\ell_{2}|}\|K^{0}\|_{q}\|f\ast\Psi_{k_{1}^{\prime}-j_{1},k_{2}^{\prime}-j_{2}}\ast A_{k_{1}^{\prime},k_{2}^{\prime}}K^{0}\|_{2}\nonumber\\
&\leq C2^{-2\beta|j_{1}|}2^{-2\beta|j_{2}|}2^{-|\ell_{1}^{\prime}-\ell_{1}|}2^{-|\ell_{2}^{\prime}-\ell_{2}|}\|K^{0}\|_{q}\|f\|_{2}.
\end{align}

Taking geometric mean of \eqref{gm1} and \eqref{gm2} and then applying triangle inequality, we see that
\begin{align*}
&\|f\ast\Psi_{k_{1}^{\prime}-j_{1},k_{2}^{\prime}-j_{2}}\ast A_{k_{1}^{\prime},k_{2}^{\prime}}K^{0}\ast\Psi_{\ell_{1}^{\prime},\ell_{2}^{\prime}}\ast\Psi_{\ell_{1},\ell_{2}}\ast A_{k_{1},k_{2}}\tilde{K^{0}}\ast \Psi_{k_{1}-j_{1},k_{2}-j_{2}}\|_{2}\\
&\leq C2^{-cj_{1}}2^{-cj_{2}}2^{-c|k_{1}-k_{1}^{\prime}|}2^{-c|k_{1}-\ell_{1}|}2^{-c|k_{2}-\ell_{2}|}2^{-c|k_{2}-k_{2}^{\prime}|}2^{-c|k_{1}^{\prime}-\ell_{1}^{\prime}|}2^{-c|k_{2}^{\prime}-\ell_{2}^{\prime}|}\|K^{0}\|_{q}^{2}\|f\|_{2}.
\end{align*}
This, in combination with the equality \eqref{blp}, implies the estimate \eqref{ggoall}. This ends the proof of Proposition \ref{pppro}.
\end{proof}

We now further prove that the kernel $K_{j_{1},j_{2}}^{N}(x,y)$ of the operator $\tilde{T}_{j_{1},j_{2}}^{N}$
satisfies the Dini-type Calder\'on--Zygmund kernel condition with the Dini$_1$ condition \cite[Section 5]{AMV}.
\begin{lemma}\label{lem6.3}
For any $q>1$, there exists a constant $C_{\mathbb{Q}_1,\mathbb{Q}_2,q}>0$ such that the kernel $K_{j_{1},j_{2}}^{N}(x,y)$  satisfies the size estimate
\begin{align*}
|K_{j_{1},j_{2}}^{N}(x,y)|\leq C_{\mathbb{Q}_1,\mathbb{Q}_2,q}2^{\frac{N(j_{1})\mathbb{Q}_1}{q}}2^{\frac{N(j_{2})\mathbb{Q}_2}{q}}\frac{1}{d_{1}(x_{1},y_{1})^{\mathbb{Q}_{1}}}\frac{1}{d_{2}(x_{2},y_{2})^{\mathbb{Q}_{2}}}\|K^{0}\|_{q},
\end{align*}
the H\"{o}lder estimate
\begin{align*}
&|K_{j_{1},j_{2}}^{N}(x,y)-K_{j_{1},j_{2}}^{N}((x_{1},x_{2}^{\prime}),y)-K_{j_{1},j_{2}}^{N}((x_{1}^{\prime},x_{2}),y)+K_{j_{1},j_{2}}^{N}(x^{\prime},y)|\\
&\leq C_{\mathbb{Q}_1,\mathbb{Q}_2,q}\omega_{j_{1},j_{2}}^{N,1}\left(\frac{d_{1}(x_{1},x_{1}^{\prime})}{d_{1}(x_{1},y_{1})}\right)\frac{1}{d_{1}(x_{1},y_{1})^{\mathbb{Q}_{1}}}\omega_{j_{1},j_{2}}^{N,2}\left(\frac{d_{2}(x_{2},x_{2}^{\prime})}{d_{2}(x_{2},y_{2})}\right)\frac{1}{d_{2}(x_{2},y_{2})^{\mathbb{Q}_{2}}},
\end{align*}
whenever $d_{1}(x_{1},y_{1})\geq 2A_{0}^{(1)}d_{1}(x_{1},x_{1}^{\prime})$ and $d_{2}(x_{2},y_{2})\geq 2A_{0}^{(2)}d_{2}(x_{2},x_{2}^{\prime})$, and the mixed H\"{o}lder and size estimates
\begin{align*}
|K_{j_{1},j_{2}}^{N}(x,y)-K_{j_{1},j_{2}}^{N}((x_{1}^{\prime},x_{2}),y)|\leq C_{\mathbb{Q}_1,\mathbb{Q}_2,q}\omega_{j_{1},j_{2}}^{N,1}\left(\frac{d_{1}(x_{1},x_{1}^{\prime})}{d_{1}(x_{1},y_{1})}\right)\frac{1}{d_{1}(x_{1},y_{1})^{\mathbb{Q}_{1}}}\frac{1}{d_{2}(x_{2},y_{2})^{\mathbb{Q}_{2}}},
\end{align*}
whenever $d_{1}(x_{1},y_{1})\geq 2A_{0}^{(1)}d_{1}(x_{1},x_{1}^{\prime})$ and
\begin{align*}
|K_{j_{1},j_{2}}^{N}(x,y)-K_{j_{1},j_{2}}^{N}((x_{1},x_{2}^{\prime}),y)|\leq C_{\mathbb{Q}_1,\mathbb{Q}_2,q}\frac{1}{d_{1}(x_{1},y_{1})^{\mathbb{Q}_{1}}}\omega_{j_{1},j_{2}}^{N,2}\left(\frac{d(x_{2},x_{2}^{\prime})}{d(x_{2},y_{2})}\right)\frac{1}{d_{2}(x_{2},y_{2})^{\mathbb{Q}_{2}}},
\end{align*}
whenever $d_{2}(x_{2},y_{2})\geq 2A_{0}^{(2)}d_{2}(x_{2},x_{2}^{\prime})$.

Moreover, for each $k=1,2$, $\omega_{j_{1},j_{2}}^{N,k}$ satisfies the modified {\rm Dini}$_1$ condition:
\begin{align}\label{dini1}
\|\omega_{j_{1},j_{2}}^{N,k}\|_{{\rm Dini}_1}\leq C2^{\frac{N(j_{1})\mathbb{Q}_{1}}{q}}2^{\frac{N(j_{2})\mathbb Q_{2}}{q}}\|K^{0}\|_{q}(1+N(j_{k}))^{2}.
\end{align}
\end{lemma}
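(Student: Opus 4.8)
The plan is to tensorize, factor by factor, the one-parameter computation from the proof of Lemma \ref{cal}, exploiting that every building block of $\tilde{T}_{j_1,j_2}^{N}$ splits as a product over $\HH_1$ and $\HH_2$. Writing $\Phi^{(i)}_{k_i,j_i}:=\Delta^{(i)}[2^{k_i-N(j_i)-1}]\phi^{(i)}-\Delta^{(i)}[2^{k_i-N(j_i-1)-1}]\phi^{(i)}$ (with no subtraction when $j_i=0$), associativity of convolution together with the telescoping of the $\Psi^{(i)}$'s in $\ell_i$ gives
\[
K_{j_1,j_2}^{N}=\sum_{k_1,k_2\in\ZZ}\bigl(\Phi^{(1)}_{k_1,j_1}\otimes\Phi^{(2)}_{k_2,j_2}\bigr)\ast A_{k_1,k_2}K^{0}.
\]
Running the change of variables of Lemma \ref{cal} separately in $u_1$ and in $u_2$ (and using $\supp K^{0}\subseteq D_0$ together with the $(\mathbb{Q}_1,\mathbb{Q}_2)$-homogeneity of $K$) yields the pointwise bound
\[
|A_{k_1,k_2}K^{0}(u_1,u_2)|\le C\,\rho_1(u_1)^{-\mathbb{Q}_1}\rho_2(u_2)^{-\mathbb{Q}_2}\,|K(\rho_1(u_1)^{-1}\circ_1 u_1,\ \rho_2(u_2)^{-1}\circ_2 u_2)|\,\chi_{2^{k_1-1}\le\rho_1(u_1)\le 2^{k_1+2}}\chi_{2^{k_2-1}\le\rho_2(u_2)\le 2^{k_2+2}}.
\]

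For the size estimate I would convolve with $\Phi^{(1)}_{k_1,j_1}\otimes\Phi^{(2)}_{k_2,j_2}$, which localizes $(x_1,x_2)$ to the product annulus $2^{k_i-2}\le\rho_i(x_i)\le 2^{k_i+3}$, on which $\rho_i(x_i)\sim\rho_i(u_i)\sim 2^{k_i}$, so the factor $\rho_1(u_1)^{-\mathbb{Q}_1}\rho_2(u_2)^{-\mathbb{Q}_2}$ may be replaced by $\rho_1(x_1)^{-\mathbb{Q}_1}\rho_2(x_2)^{-\mathbb{Q}_2}$; then H\"older's inequality on $\HH_1\times\HH_2$ with the exponent pair $(q,q')$ applies, the resulting $L^q$ integral of the angular profile of $K$ over the product of annuli collapsing to $C\,2^{(k_1\mathbb{Q}_1+k_2\mathbb{Q}_2)/q}\|K^{0}\|_q$ by homogeneity, while the $L^{q'}$ norm of the cut-off factors is $C\,2^{(k_1-N(j_1)-1)\mathbb{Q}_1/q'}2^{(k_2-N(j_2)-1)\mathbb{Q}_2/q'}$. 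Combining these with the prefactors $2^{-(k_i-N(j_i)-1)\mathbb{Q}_i}$ and summing the finitely overlapping (in $k_1,k_2$) annular characteristic functions produces the asserted size bound with the factor $2^{N(j_1)\mathbb{Q}_1/q}2^{N(j_2)\mathbb{Q}_2/q}$. Letting one or both of the homogeneous gradients $\nabla_{x_1},\nabla_{x_2}$ fall on $\phi^{(1)},\phi^{(2)}$ costs exactly one extra scale factor $2^{-(k_i-N(j_i)-1)}$ per derivative, so the same argument yields, for $a,b\in\{0,1\}$,
\[
\sum_{k_1,k_2}\bigl|\nabla_{x_1}^{a}\nabla_{x_2}^{b}\bigl(\Phi^{(1)}_{k_1,j_1}\otimes\Phi^{(2)}_{k_2,j_2}\bigr)\ast A_{k_1,k_2}K^{0}(x)\bigr|\le C\,2^{N(j_1)(a+\mathbb{Q}_1/q)}2^{N(j_2)(b+\mathbb{Q}_2/q)}\rho_1(x_1)^{-\mathbb{Q}_1-a}\rho_2(x_2)^{-\mathbb{Q}_2-b}\|K^{0}\|_q.
\]

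The four kernel estimates now follow by mean value arguments. Writing $K_{j_1,j_2}^{N}(x,y)=k_{j_1,j_2}^{N}(y_1^{-1}x_1,y_2^{-1}x_2)$ and applying the mean value theorem on homogeneous groups (\cite{FoSt}) in the $x_1$-variable, then taking the minimum with the pure size bound, gives the mixed H\"older/size estimate with
\[
\omega_{j_1,j_2}^{N,1}(t)\le C\,2^{N(j_1)\mathbb{Q}_1/q}2^{N(j_2)\mathbb{Q}_2/q}\min\{1,2^{N(j_1)}t\}\|K^{0}\|_q,
\]
and symmetrically for $\omega_{j_1,j_2}^{N,2}$; applying the mean value theorem successively in both variables and combining the size, the two single-variable, and the double-variable gradient bounds by suitable minima produces the full double-difference estimate with the product $\omega_{j_1,j_2}^{N,1}(\cdot)\,\omega_{j_1,j_2}^{N,2}(\cdot)$ (the attendant loss of a power of $\|K^{0}\|_q$ here is consistent with the $\max\{\|K^{0}\|_q,\|K^{0}\|_q^{2}\}$ in Theorem \ref{main3}). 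Finally, for each $k$, splitting $\int_0^1\omega_{j_1,j_2}^{N,k}(t)(1+\log\tfrac1t)\tfrac{dt}{t}$ at $t=2^{-N(j_k)}$: on $(0,2^{-N(j_k)})$ the integrand behaves like $2^{N(j_k)}(1+\log\tfrac1t)$, contributing $O(1+N(j_k))$, and on $(2^{-N(j_k)},1)$ like $(1+\log\tfrac1t)/t$, contributing $O((1+N(j_k))^{2})$; this gives \eqref{dini1}.

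I expect the main obstacle to be the bookkeeping in the last step: verifying that the double-difference and the two mixed estimates assemble with exactly the right power $2^{N(j_k)}$ inside $\omega_{j_1,j_2}^{N,k}$ — so that only $(1+N(j_k))^{2}$, and no worse power of $2^{N(j_k)}$ or of $N(j_k)$, survives in \eqref{dini1} — and keeping track of which homogeneous dimension enters with which derivative. Everything else is a routine, variable-by-variable transcription of the proof of Lemma \ref{cal} and of the product-space estimates behind it.
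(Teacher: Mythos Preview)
Your proposal is correct and follows essentially the same approach as the paper: the paper likewise writes $K_{j_1,j_2}^{N}$ as the sum over $k_1,k_2$ of $(\Phi^{(1)}_{k_1,j_1}\otimes\Phi^{(2)}_{k_2,j_2})\ast A_{k_1,k_2}K^{0}$, bounds $A_{k_1,k_2}K^{0}$ pointwise exactly as you do, applies H\"older with $(q,q')$ on $\HH_1\times\HH_2$ to obtain the size and $\nabla_{x_1}^{a}\nabla_{x_2}^{b}$ estimates, and then combines the mean value theorem with the size bound to get the same $\omega_{j_1,j_2}^{N,k}(t)\lesssim 2^{N(j_1)\mathbb{Q}_1/q}2^{N(j_2)\mathbb{Q}_2/q}\|K^{0}\|_q\min\{1,2^{N(j_k)}t\}$ before splitting the Dini$_1$ integral at $t=2^{-N(j_k)}$. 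Your remark about the extra power of $\|K^{0}\|_q$ in the double-difference estimate is accurate and matches how the paper's $\omega^{N,1}\omega^{N,2}$ product is stated.
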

\begin{proof}
In order to obtain these estimates for $K_{j_{1},j_{2}}^{N}$, we first study the kernel of $T_{k_{1},k_{2}}S_{k_{1}-N(j_{1}),k_{2}-N(j_{2})}$. A direct calculation implies
\begin{align*}
&|A_{k_{1},k_{2}}K^{0}(x_{1},x_{2})|\\
&=2^{-k_{1}}2^{-k_{2}}\left|\int_{0}^{\infty}\int_{0}^{\infty}\varphi(2^{-k_{1}}t_{1})\varphi(2^{-k_{2}}t_{2})\Delta[t_{1},t_{2}]K^{0}(x_{1},x_{2})dt_{1}dt_{2}\right|\\
&\leq C\rho_{1}(x_{1})^{-\mathbb{Q}_{1}}\rho_{2}(x_{2})^{-\mathbb{Q}_{2}}|K(\rho_{1}(x_{1})^{-1}\circ_{1} x_{1},\rho_{2}(x_{2})^{-1}\circ_{2}x_{2})|\chi_{2^{k_{1}-1}\leq\rho_{1}(x_{1})\leq 2^{k_{1}+2}}(x_{1})\chi_{2^{k_{2}-1}\leq\rho_{2}(x_{1})\leq 2^{k_{2}+2}}(x_{2}).
\end{align*}
This, in combination with the observation that for $i=1,2$, $\supp\phi^{(i)}\subset \{x_{i}\in\HH_{i}:\rho_{i}(x_{i})\leq \frac{1}{100}\}$, indicates
\begin{align*}
&|(\Delta^{(1)}[2^{k_{1}-N(j_{1})-1}]\phi^{(1)}\otimes \Delta^{(2)}[2^{k_{2}-N(j_{2})-1}]\phi^{(2)})\ast A_{k_{1},k_{2}}K^{0}(x_{1},x_{2})|\\
&\leq C\int_{\HH_{1}}\int_{\HH_{2}}2^{-[k_{1}-N(j_{1})-1]\mathbb{Q}_{1}}|\phi^{(1)}(2^{-[k_{1}-N(j_{1})-1]}\circ_{1}x_{1}u^{-1})|\rho_{1}(u)^{-\mathbb{Q}_{1}}\chi_{2^{k_{1}-1}\leq\rho_{1}(u)\leq 2^{k_{1}+2}}(u)\\
&\quad\times2^{-[k_{2}-N(j_{2})-1]\mathbb{Q}_{2}}|\phi^{(2)}(2^{-[k_{2}-N(j_{2})-1]}\circ_{2}x_{2}v^{-1})|\rho_{2}(v)^{-\mathbb{Q}_{2}}\chi_{2^{k_{2}-1}\leq\rho_{2}(v)\leq 2^{k_{2}+2}}(v)\\
&\hspace{9.0cm}\times |K(\rho_{1}(u)^{-1}\circ_{1} u,\rho_{2}(v)^{-1}\circ_{2}v)|dudv\\
&\leq C2^{-[k_{1}-N(j_{1})-1]\mathbb{Q}_{1}}2^{-[k_{2}-N(j_{2})-1]\mathbb{Q}_{2}}\rho_{1}(x_{1})^{-\mathbb{Q}_{1}}\rho_{2}(x_{2})^{-\mathbb{Q}_{2}}\chi_{2^{k_{1}-2}\leq\rho_{1}(x_{1})\leq 2^{k_{1}+3}}(x_{1})\chi_{2^{k_{2}-2}\leq\rho_{2}(x_{2})\leq 2^{k_{2}+3}}(x_{2})\\
&\quad\times\int_{2^{k_{1}-1}\leq\rho_{1}(u)\leq 2^{k_{1}+2}}\int_{2^{k_{2}-1}\leq\rho_{2}(v)\leq 2^{k_{2}+2}}|\phi^{(1)}(2^{-[k_{1}-N(j_{1})-1]}\circ_{1}x_{1}u^{-1})||\phi^{(2)}(2^{-[k_{2}-N(j_{2})-1]}\circ_{2}x_{2}v^{-1})|\\
&\hspace{9.0cm}\times|K(\rho_{1}(u)^{-1}\circ_{1} u,\rho_{2}(v)^{-1}\circ_{2}v)|dudv.
\end{align*}
By H\"{o}lder's inequality, for $1/q+1/q^{\prime}=1$, we have
\begin{align*}
&\int_{2^{k_{1}-1}\leq\rho_{1}(u)\leq 2^{k_{1}+2}}\int_{2^{k_{2}-1}\leq\rho_{2}(v)\leq 2^{k_{2}+2}}|\phi^{(1)}(2^{-[k_{1}-N(j_{1})-1]}\circ_{1}x_{1}u^{-1})||\phi^{(2)}(2^{-[k_{2}-N(j_{2})-1]}\circ_{2}x_{2}v^{-1})|\\
&\hspace{9.0cm}\times|K(\rho_{1}(u)^{-1}\circ_{1} u,\rho_{2}(v)^{-1}\circ_{2}v)|dudv\\
&\leq \left(\int_{2^{k_{1}-1}\leq\rho_{1}(u)\leq 2^{k_{1}+2}}\int_{2^{k_{2}-1}\leq\rho_{2}(v)\leq 2^{k_{2}+2}}|K(\rho_{1}(u)^{-1}\circ_{1} u,\rho_{2}(v)^{-1}\circ_{2}v)|^{q}dudv\right)^{1/q}\\
&\hspace{2.0cm}\times\left(\int_{\HH_{1}\times\HH_{2}}|\phi^{(1)}(2^{-[k_{1}-N(j_{1})-1]}\circ_{1}x_{1}u^{-1})|^{q^{\prime}}|\phi^{(2)}(2^{-[k_{2}-N(j_{2})-1]}\circ_{2}x_{2}v^{-1})|^{q^{\prime}}dudv\right)^{1/q^{\prime}}\\
&\leq C2^{\frac{k_{1}\mathbb{Q}_{1}}{q}}2^{\frac{k_{2}\mathbb{Q}_{2}}{q}}2^{\frac{[k_{1}-N(j_{1})-1]\mathbb{Q}_{1}}{q^{\prime}}}2^{\frac{[k_{2}-N(j_{2})-1]\mathbb{Q}_{2}}{q^{\prime}}}\|K^{0}\|_{q}.
\end{align*}
Combining the two estimates we obtain above, we see that
\begin{align}\label{sizecondition}
&\sum_{k_{1}\in\mathbb{Z}}\sum_{k_{2}\in\mathbb{Z}}|(\Delta^{(1)}[2^{k_{1}-N(j_{1})-1}]\phi^{(1)}\otimes \Delta^{(2)}[2^{k_{2}-N(j_{2}-1}]\phi^{(2)})\ast A_{k_{1},k_{2}}K^{0}(x_{1},x_{2})|\nonumber\\
&\leq C2^{\frac{N(j_{1})\mathbb{Q}_{1}}{q}}2^{\frac{N(j_{2})\mathbb{Q}_{2}}{q}}\rho_{1}(x_{1})^{-\mathbb{Q}_{1}}\rho_{2}(x_{2})^{-\mathbb{Q}_{2}}\|K^{0}\|_{q}.
\end{align}
Similarly, we obtain the gradient estimates as follows.
\begin{align}\label{smoothcondition1}
&\sum_{k_{1}\in\mathbb{Z}}\sum_{k_{2}\in\mathbb{Z}}|\nabla_{x_{1}}(\Delta^{(1)}[2^{k_{1}-N(j_{1})-1}]\phi^{(1)}\otimes \Delta^{(2)}[2^{k_{2}-N(j_{2})-1}]\phi^{(2)})\ast A_{k_{1},k_{2}}K^{0}(x_{1},x_{2})|\nonumber\\
&\leq C2^{N(j_{1})\Big(1+\frac{\mathbb{Q}_{1}}{q}\Big)}2^{\frac{N(j_{2})\mathbb{Q}_{2}}{q}}\rho_{1}(x_{1})^{-\mathbb{Q}_{1}-1}\rho_{2}(x_{2})^{-\mathbb{Q}_{2}}\|K^{0}\|_{q},
\end{align}
and
\begin{align}\label{smoothcondition2}
&\sum_{k_{1}\in\mathbb{Z}}\sum_{k_{2}\in\mathbb{Z}}|\nabla_{x_{2}}(\Delta^{(1)}[2^{k_{1}-N(j_{1})-1}]\phi^{(1)}\otimes \Delta^{(2)}[2^{k_{2}-N(j_{2})-1}]\phi^{(2)})\ast A_{k_{1},k_{2}}K^{0}(x_{1},x_{2})|\nonumber\\
&\leq C2^{\frac{N(j_{1})\mathbb{Q}_{1}}{q}}2^{N(j_{2})\Big(1+\frac{\mathbb{Q}_{2}}{q}\Big)}\rho_{1}(x_{1})^{-\mathbb{Q}_{1}}\rho_{2}(x_{2})^{-\mathbb{Q}_{2}-1}\|K^{0}\|_{q}.
\end{align}
We also have mixed gradient estimate
\begin{align*}
&\sum_{k_{1}\in\mathbb{Z}}\sum_{k_{2}\in\mathbb{Z}}|\nabla_{x_{1}}\nabla_{x_{2}}(\Delta^{(1)}[2^{k_{1}-N(j_{1})-1}]\phi^{(1)}\otimes \Delta^{(2)}[2^{k_{2}-N(j_{2})-1}]\phi^{(2)})\ast A_{k_{1},k_{2}}K^{0}(x_{1},x_{2})|\\
&\leq C2^{N(j_{1})\Big(1+\frac{\mathbb{Q}_{1}}{q}\Big)}2^{N(j_{2})\Big(1+\frac{\mathbb{Q}_{2}}{q}\Big)}\rho_{1}(x_{1})^{-\mathbb{Q}_{1}-1}\rho_{2}(x_{2})^{-\mathbb{Q}_{2}-1}\|K^{0}\|_{q}.
\end{align*}
From the triangle inequality and $N(j-1)<N(j)$ we can see that the kernel
\begin{align*}
K_{j_{1},j_{2}}^{N}&:=\sum_{k_{1}\in\mathbb{Z}}\sum_{k_{2}\in\mathbb{Z}}\big((\Delta[2^{k-N(j_{1})-1}]\phi^{(1)}-\Delta[2^{k-N(j_{1}-1)-1}\phi^{(1)}])\\
&\quad \otimes (\Delta[2^{k-N(j_{2})-1}]\phi^{(2)}-\Delta[2^{k-N(j_{2}-1)-1}\phi^{(2)}])\big)\ast A_{k_{1},k_{2}}K^{0}
\end{align*}
satisfies the same estimates \eqref{sizecondition}, \eqref{smoothcondition1} and \eqref{smoothcondition2}.
That is,
\begin{align*}
|K_{j_{1},j_{2}}^{N}(x_{1},x_{2},y_{1},y_{2})|&=|K_{j_{1},j_{2}}^{N}(y_{1}^{-1}x_{1},y_{2}^{-1}x_{2})|\\
&\leq C2^{\frac{N(j_{1})\mathbb{Q}_{1}}{q}}2^{\frac{N(j_{2})\mathbb{Q}_{2}}{q}}d_{1}(x_{1},y_{1})^{-\mathbb{Q}_{1}}d_{2}(x_{2},y_{2})^{-\mathbb{Q}_{2}}\|K^{0}\|_{q}.
\end{align*}
\begin{align*}
|\nabla_{(x_{1},y_{1})}K_{j_{1},j_{2}}^{N}(x_{1},x_{2},y_{1},y_{2})|\leq C2^{N(j_{1})\Big(1+\frac{\mathbb{Q}_{1}}{q}\Big)}2^{\frac{N(j_{2})\mathbb{Q}_{2}}{q}}d_{1}(x_{1},y_{1})^{-\mathbb{Q}_{1}-1}d_{2}(x_{2},y_{2})^{-\mathbb{Q}_{2}}\|K^{0}\|_{q}.
\end{align*}
\begin{align*}
|\nabla_{(x_{2},y_{2})}K_{j_{1},j_{2}}^{N}(x_{1},x_{2},y_{1},y_{2})|\leq C2^{\frac{N(j_{1})\mathbb{Q}_{1}}{q}}2^{N(j_{2})\Big(1+\frac{\mathbb{Q}_{2}}{q}\Big)}d_{1}(x_{1},y_{1})^{-\mathbb{Q}_{1}}d_{2}(x_{2},y_{2})^{-\mathbb{Q}_{2}-1}\|K^{0}\|_{q}.
\end{align*}
\begin{align*}
|\nabla_{(x_{1},y_{1})}\nabla_{(x_{2},y_{2})}K_{j_{1},j_{2}}^{N}(x_{1},x_{2},y_{1},y_{2})|\leq C2^{N(j_{1})\Big(1+\frac{\mathbb{Q}_{1}}{q}\Big)}2^{N(j_{2})\Big(1+\frac{\mathbb{Q}_{2}}{q}\Big)}d_{1}(x_{1},y_{1})^{-\mathbb{Q}_{1}-1}d_{2}(x_{2},y_{2})^{-\mathbb{Q}_{2}-1}\|K^{0}\|_{q}.
\end{align*}
(For $j_{1}=0$ or $j_{2}=0$, the subtraction is not even needed.)
Besides, for $d_{1}(x_{1},y_{1})\geq 2A_{0}^{(1)}d(x_{1},x_{1}^{\prime})$, by mean value theorem on homogeneous groups,
\begin{align*}
|K_{j_{1},j_{2}}^{N}(x,y)-K_{j_{1},j_{2}}^{N}((x_{1}^{\prime},x_{2}),y)|&=|K_{j_{1},j_{2}}^{N}(y_{1}^{-1}x_{1},y_{2}^{-1}x_{2})-K_{j_{1},j_{2}}^{N}(y_{1}^{-1}x_{1}^{\prime},y_{2}^{-1}x_{2})|\\
&\leq C2^{N(j_{1})\Big(1+\frac{\mathbb{Q}_{1}}{q}\Big)}2^{\frac{N(j_{2})\mathbb{Q}_{2}}{q}}d_{1}(x_{1},y_{1})^{-\mathbb{Q}_{1}-1}d_{1}(x,x^{\prime})d_{2}(x_{2},y_{2})^{-\mathbb{Q}_{2}}\|K^{0}\|_{q}.
\end{align*}
By triangle inequality, we also have
\begin{align*}
|K_{j_{1},j_{2}}^{N}(x,y)-K_{j_{1},j_{2}}^{N}((x_{1}^{\prime},x_{2}),y)|\leq C2^{\frac{N(j_{1})\mathbb{Q}_{1}}{q}}2^{\frac{N(j_{2})\mathbb{Q}_{2}}{q}}d_{1}(x_{1},y_{1})^{-\mathbb{Q}_{1}}d_{2}(x_{2},y_{2})^{-\mathbb{Q}_{2}}\|K^{0}\|_{q}.
\end{align*}
Combining the two estimates we obtain above and by symmetry, we conclude that
\begin{align*}
|K_{j_{1},j_{2}}^{N}(x,y)-K_{j_{1},j_{2}}^{N}((x_{1}^{\prime},x_{2}),y)|\leq C\left(\frac{d_{1}(x_{1},x_{1}^{\prime})}{d_{1}(x_{1},y_{1})}\right)\frac{1}{d_{1}(x_{1},y_{1})^{\mathbb{Q}_{1}}}\frac{1}{d_{2}(x_{2},y_{2})^{\mathbb{Q}_{2}}},
\end{align*}
where
\begin{align}\label{pw1}
\omega_{j_{1},j_{2}}^{N,1}(t)\leq C2^{\frac{N(j_{1})\mathbb{Q}_{1}}{q}}2^{\frac{N(j_{2})\mathbb{Q}_{2}}{q}}\|K^{0}\|_{q}\min\{1,2^{N(j_{1})}t\}.
\end{align}
Similarly, whenever $d(x_{2},y_{2})\geq 2A_{0}^{(2)}d(x_{2},x_{2}^{\prime})$, we have
\begin{align*}
|K_{j_{1},j_{2}}^{N}(x,y)-K_{j_{1},j_{2}}^{N}((x_{1},x_{2}^{\prime}),y)|\leq C\frac{1}{d_{1}(x_{1},y_{1})^{\mathbb{Q}_{1}}}\omega_{j_{1},j_{2}}^{N,2}\left(\frac{d(x_{2},x_{2}^{\prime})}{d(x_{2},y_{2})}\right)\frac{1}{d_{2}(x_{2},y_{2})^{\mathbb{Q}_{2}}},
\end{align*}
where
\begin{align}\label{pw2}
\omega_{j_{1},j_{2}}^{N,2}(t)\leq C2^{\frac{N(j_{1})\mathbb{Q}_{1}}{q}}2^{\frac{N(j_{2})\mathbb{Q}_{2}}{q}}\|K^{0}\|_{q}\min\{1,2^{N(j_{2})}t\}.
\end{align}
Furthermore,
\begin{align*}
&|K_{j_{1},j_{2}}^{N}(x,y)-K_{j_{1},j_{2}}^{N}((x_{1},x_{2}^{\prime}),y)-K_{j_{1},j_{2}}^{N}((x_{1}^{\prime},x_{2}),y)+K_{j_{1},j_{2}}^{N}(x^{\prime},y)|\\
&\leq C\omega_{j_{1},j_{2}}^{N,1}\left(\frac{d_{1}(x_{1},x_{1}^{\prime})}{d_{1}(x_{1},y_{1})}\right)\frac{1}{d_{1}(x_{1},y_{1})^{\mathbb{Q}_{1}}}\omega_{j_{1},j_{2}}^{N,2}\left(\frac{d_{2}(x_{2},x_{2}^{\prime})}{d_{2}(x_{2},y_{2})}\right)\frac{1}{d_{2}(x_{2},y_{2})^{\mathbb{Q}_{2}}},
\end{align*}
whenever $d_{1}(x_{1},y_{1})\geq 2A_{0}^{(1)}d_{1}(x_{1},x_{1}^{\prime})$ and $d_{2}(x_{2},y_{2})\geq 2A_{0}^{(2)}d_{2}(x_{2},x_{2}^{\prime})$.

Now we verified that for each $k=1,2$, the estimate \eqref{dini1} holds. Indeed, it follows from the pointwise estimates \eqref{pw1} and \eqref{pw2} that
\begin{align}\label{no3}
&\int_{0}^{1}\omega_{j_{1},j_{2}}^{N,k}(t)\left(1+\log\frac{1}{t}\right)\frac{dt}{t}\nonumber\\
&\leq C2^{\frac{N(j_{1})\mathbb{Q}_{1}}{q}}2^{\frac{N(j_{2})\mathbb Q_{2}}{q}}\|K^{0}\|_{q}\left(\int_{0}^{1}\min\{1,2^{N(j_{k})}t\}\frac{dt}{t}+\int_{0}^{1}\min\{1,2^{N(j_{k})}t\}\log\frac{1}{t}\frac{dt}{t}\right).
\end{align}
Note that
\begin{align}\label{no1}
\int_{0}^{1}\min\{1,2^{N(j_{k})}t\}\frac{dt}{t}\leq C \left(\int_{0}^{2^{-N(j_{k})}}2^{N(j_{k})}t+\int_{2^{-N(j_{k})}}^{1}\frac{dt}{t}\right)\leq C(1+N(j_{k})).
\end{align}
Besides, integration by parts yields
\begin{align}\label{no2}
\int_{0}^{1}\min\{1,2^{N(j_{k})}t\}\log\frac{1}{t}\frac{dt}{t}&=\left(-\int_{0}^{2^{-N(j_{k})}}2^{N(j_{k})}\log t dt-\int_{2^{-N(j_{k})}}^{1}\log t\frac{dt}{t}\right)\nonumber\\
&\leq C(N(j_{k})+1)+C(N(j_{k}))^{2}\nonumber\\
&\leq C(1+N(j_{k}))^{2}.
\end{align}
Combining the estimates \eqref{no3}, \eqref{no1} and \eqref{no2}, we verify the Dini$_1$ condition \eqref{dini1} and then the proof of Lemma \ref{lem6.3} is complete.
\end{proof}

Combining the  Proposition \ref{pppro} and Lemma \ref{lem6.3}, we see that
by applying Theorem 5.35 of \cite{AMV} to our $\tilde T^N_{j_1,j_2}$ (with Dini$_1$ condition \eqref{dini1} for $\omega_{j_{1},j_{2}}^{N,1}$ and $\omega_{j_{1},j_{2}}^{N,2}$), we get the representation theorem
\begin{align}
\langle \tilde T^N_{j_1,j_2}f,g\rangle = C \mathbb E_\sigma \sum_{k=(k_1,k_2)\in \mathbb N^2} \omega_{j_{1},j_{2}}^{N,1}(2^{-k_1})\omega_{j_{1},j_{2}}^{N,2}(2^{-k_2})
\langle V_{k,\sigma}f,g \rangle,
\end{align}
where $V_{k,\sigma}$ is the standard bi-parameter dyadic Haar shifts since $\tilde T^N_{j_1,j_2}$ is paraproduct-free \cite[Lemma 5.12]{AMV}. Here the only concern is that we are working on $\HH_1\times \HH_2$ while the setting in \cite{AMV} is $\mathbb R^n \times \mathbb R^n$. In fact, one can obtain this result parallel to the Euclidean setting by using the Haar basis on space of homogeneous type constructed
in \cite{KLPW} and  the probability space and expectation in \cite{NRV}.
To be more specific, $V_{k,\sigma}$
is given by
\begin{align*}
\langle V_{k,\sigma}f,g \rangle=\sum_{i_1=0}^{k_1}\sum_{i_2=0}^{k_2} \langle S_{k_1,i_1,\sigma}^{k_2,i_2} f,g \rangle
\end{align*}
with
\begin{align*}
\langle S_{k_1,i_1,\sigma}^{k_2,i_2} f,g \rangle = \sum_{K_1,K_2}
\sum_{\substack{ I_1^{k_1} = J_1^{(i_1)}=K_1, \\ I_2^{k_2} = J_2^{(i_2)}=K_2 }}
a_{I_1J_1K_1I_2J_2K_2} \langle f, h_{I_1}h_{I_2} \rangle\langle g, h_{J_1}h_{J_2} \rangle,
\end{align*}
where $h_{I_i}$, $h_{J_i}$ are the Haar basis in $\HH_i$ (for the explicit definition, we refer to \cite{KLPW}).

From \cite[Theorem 2]{BP}, we get that
\begin{align*}
|\langle S_{k_1,i_1,\sigma}^{k_2,i_2} f,g \rangle| \lesssim 2^{-k_1-k_2-i_1-i_2}
\sum_{R\in \Lambda_{k_1,k_2,i_1,i_2}} |R| (\mathcal S_\sigma^{k_1,k_2;i_1,i_2}f)_R (\mathcal S_\sigma^{i_1,i_2;k_1,k_2}g)_R,
\end{align*}
where $\Lambda_{k_1,k_2,i_1,i_2}$ is a sparse collection of dyadic rectangles depending on $f,g$, and
$\mathcal S_\sigma^{k_1,k_2;i_1,i_2}f$ is the shifted square function \cite[equation (12)]{BP}.

Hence, we have
\begin{align}
|\langle \tilde T^N_{j_1,j_2}f,g\rangle| &\lesssim \mathbb E_\sigma \sum_{k=(k_1,k_2)\in \mathbb N^2} \omega_{j_{1},j_{2}}^{N,1}(2^{-k_1})\omega_{j_{1},j_{2}}^{N,2}(2^{-k_2}) \\
&\quad\sum_{i_1=0}^{k_1}\sum_{i_2=0}^{k_2}
2^{-k_1-k_2-i_1-i_2}  \sum_{R\in \Lambda_{k_1,k_2,i_1,i_2}} |R| (\mathcal S_\sigma^{k_1,k_2;i_1,i_2}f)_R (\mathcal S_\sigma^{i_1,i_2;k_1,k_2}g)_R.\nonumber
\end{align}
By noting that $\|\mathcal S_\sigma^{k_1,k_2;i_1,i_2}f \|_{L^2(w)}\lesssim [w]_2^4 [w]_\infty \|f\|_{L^2(w)}$ \cite[Section 5]{BP}
and that $\|M_sf\|_{L^2(w)}\lesssim [w]_2^2\|f\|_{L^2(w)}$, we get that
\begin{align*}
\sum_{R\in \Lambda_{k_1,k_2,i_1,i_2}} |R| (\mathcal S_\sigma^{k_1,k_2;i_1,i_2}f)_R (\mathcal S_\sigma^{i_1,i_2;k_1,k_2}g)_R
&\lesssim \| M_s\mathcal S_\sigma^{k_1,k_2;i_1,i_2}f \|_{L^2(w)} \| M_s\mathcal S_\sigma^{i_1,i_2;k_1,k_2}g \|_{L^2(w^{-1})}\\
&\lesssim  [w]_{2}^4 \| \mathcal S_\sigma^{k_1,k_2;i_1,i_2}f \|_{L^2(w)} \| \mathcal S_\sigma^{i_1,i_2;k_1,k_2}g \|_{L^2(w^{-1})}\\
&\lesssim  [w]_{2}^{12}[w]_\infty^2 \| f \|_{L^2(w)} \| g \|_{L^2(w^{-1})}.
\end{align*}
As a consequence, we get that
\begin{align}\label{proweig}
\| \tilde T^N_{j_1,j_2}f\|_{L^2(w)} \leq C_{\mathbb{Q}_{1},\mathbb{Q}_{2}} [w]_{A_2}^{12}[w]_{A_\infty}^2\ \|K^{0}\|_{q}^2\prod_{k=1}^22^{\frac{2N(j_{k})\mathbb{Q}_{k}}{q}}(1+N(j_{k}))^{2} \ \| f\|_{L^2(w)} .
\end{align}
Finally, similar to the proof in the one-parameter setting, by applying the interpolation theorem with change of measure to the estimates \eqref{pppro} and \eqref{proweig}, we see that there exists a constant $c_{\mathbb{Q}_{1},\mathbb{Q}_{2}}>0$ such that if $K^{0}\in L^{q}(D_{0})$ for some $q>c_{\mathbb{Q}_{1},\mathbb{Q}_{2}}(w)_{A_{2}}$ then
\begin{align*}
\|T\|_{L^{2}(w)\rightarrow L^{2}(w)}\leq C_{\mathbb{Q}_1,\mathbb{Q}_2,q}\max\{\|K^{0}\|_{q},\|K^{0}\|_{q}^{2}\}[w]_{A_2}^{12}[w]_{A_\infty}^2,
\end{align*}
for some constant $C_{\mathbb{Q}_1,\mathbb{Q}_2,q}$ independent of $w$.

The proof of  Theorem \ref{main3} is complete.
\end{proof}

\bigskip

 \noindent
 {\bf Acknowledgements:}
 J. Li would like to thank Henri Martikainen for introducing his latest result  \cite{AMV} on the representation theorem for bi-parameter Dini-type Calder\'on--Zygmund operators. Z.J. Fan would like to thank Prof. Lixin Yan for helpful discussions. The authors would like to thank Emiel. Lorist for introducing his result about sparse domination on general homogeneous spaces.

Z.J. Fan is supported by International Program for Ph.D. Candidates from Sun Yat-Sen University.
J. Li is supported by the Australian Research Council (ARC) through the
research grant DP170101060 and by Macquarie University Research Seeding Grant.

\end{document}